\documentclass[a4paper,10pt]{article}
\usepackage{amsmath,amsthm,xypic,
amssymb,amsfonts,
pdfsync,stmaryrd,mathrsfs,yfonts
}
\newtheorem{thm}{Theorem}[section]

\newtheorem{prop}[thm]{Proposition}
\newtheorem{cor}[thm]{Corollary}

\newtheorem{lem}[thm]{Lemma}
\theoremstyle{definition}

\newtheorem{defi}[thm]{Definition}

\newtheorem{rem}[thm]{Remark}

\newtheorem{notation}[thm]{Notation}
\newtheorem{exm}[thm]{Example}

\theoremstyle{plain}

\newcommand{\lla}{\langle\!\langle}
\newcommand{\rra}{\rangle\!\rangle}

\renewcommand{\phi}{\varphi}

\newcommand{\alt}{\mathop{\mathrm{Alt}}}
\newcommand{\co}{\mathop{\mathrm{cor}}}

\newcommand{\sym}{\mathop{\mathrm{Sym}}}

\newcommand{\car}{\mathop{\mathrm{char}}}
\newcommand{\id}{\mathop{\mathrm{id}}}

\newcommand{\disc}{\mathop{\mathrm{disc}}}

\newcommand\iso{\xrightarrow{
   \,\smash{\raisebox{-0.3ex}{\ensuremath{\scriptstyle\sim}}}\,}}


\author{A.-H. Nokhodkar}

\title{Quadratic descent of totally decomposable orthogonal involutions in characteristic two}

\begin{document}
\maketitle

\begin{abstract}
We investigate the quadratic descent of totally decomposable algebras with involution of orthogonal type in characteristic two.
Both separable and inseparable extensions are included.\\
\noindent
\emph{Mathematics Subject Classification:} 16W10, 16K20, 11E39. \\
\end{abstract}

\section{Introduction}

Let $K/F$ be a field extension and let $(A,\sigma)$ be a central simple algebra with involution over $K$.
By a {\it descent} of $(A,\sigma)$ to $F$ we mean an $F$-algebra with involution $(B,\sigma')$ satisfying $(A,\sigma)\simeq_K(B,\sigma')\otimes_F(K,\id)$.
In the case where $\car F\neq2$ and $K/F$ is a quadratic extension, the descent of orthogonal involutions of degree $2$ and $4$ over $K$ was studied  in \cite{dherte} and necessary and sufficient
conditions in terms of the discriminant and the Clifford algebra were given.
Also, if $\car F=2$ and $K/F$ is a finite extension satisfying $K^2\subseteq F$, a criterion for totally decomposable algebras with orthogonal involution over $K$ to have a descent to $F$ was obtained in \cite[(6.2)]{mn2}.

Let $(A,\sigma)$ be a totally decomposable algebra with orthogonal involution over a field $F$ of characteristic $2$.
In \cite{mn2}, a maximal commutative Frobenius subalgebra $\Phi(A,\sigma)$ of $A$ was introduced and the relation between this subalgebra and the {\it Pfister invariant} $\mathfrak{Pf}(A,\sigma)$, a bilinear Pfister form associated to $(A,\sigma)$ defined in \cite{dolphin}, was studied.
It was also shown that totally decomposable orthogonal involutions can be classified, up to conjugation, by their Pfister invariant (see \cite[(6.5)]{mn2}).

In this work we study the quadratic descent of totally decomposable ortho\-gonal involutions in characteristic $2$.
We first consider inseparable extensions.
If $K=F(\sqrt\alpha)$ is a quadratic extension of $F$ and $(A,\sigma)$ is a totally decomposable algebra with orthogonal involution over $K$, we find necessary and sufficient conditions for $(A,\sigma)$,
in terms of $\Phi(A,\sigma)$ and $\mathfrak{Pf}(A,\sigma)$, to has a descent to $F$.
It is also shown that if $(A,\sigma)$ has a descent to $F$, then it has a {\it totally decomposable descent} to $F$, i.e., a descent which is totally decomposable itself (see (\ref{insep})).
This result complements \cite[(6.2)]{mn2} for quadratic extensions.

We next consider separable extensions.
In \S\ref{sec-co} we investigate the quadratic descent of quaternion algebras with orthogonal involution.
In \S\ref{sec-conic} some properties of totally singular conic algebras are studied.
These results will be used in \S\ref{sec-an} to obtain a descent property of the Pfister invariant.
Finally, in \S\ref{sec-sep} we state our main result (\ref{main}) which asserts that for a separable quadratic extension $K/F$ and a totally decomposable algebra with orthogonal involution $(A,\sigma)$ over $K$, the following conditions are equivalent:
(1) $(A,\sigma)$ has a descent to $F$. (2) $(A,\sigma)$ has a totally decomposable descent to $F$.
(3) The corestriction $\co_{K/F}(A)$ splits and $\mathfrak{Pf}(A,\sigma)\simeq\mathfrak{b}_K$ for some symmetric bilinear form $\mathfrak{b}$ over~$F$.
In the case where $\sigma$ is isotropic, these conditions are also equivalent to: (4) $\co_{K/F}(A)$ splits and $\Phi(A,\sigma)\simeq S\otimes_FK$ for some totally singular conic $F$-algebra $S$.

\section{Preliminaries}
Throughout this work, all fields are supposed to be of characteristic $2$.

Let $A$ be a central simple algebra over a field $F$.
An {\it involution} on $A$ is a map $\sigma:A\rightarrow A$ satisfying $\sigma^2(x)=x$, $\sigma(x+y)=\sigma(x)+\sigma(y)$ and $\sigma(xy)=\sigma(y)\sigma(x)$ for $x,y\in A$.
For an algebra with involution $(A,\sigma)$ we define the subspaces
\[\alt(A,\sigma)=\{x-\sigma(x)\mid x\in A\}\quad {\rm and}\quad \sym(A,\sigma)=\{x\in A\mid\sigma(x)=x\}.\]
If $\sigma|_F=\id$, we say that $\sigma$ is of {\it the first kind}.
Otherwise, $\sigma$ is said to be of {\it the second kind}.
An involution $\sigma$ of the first kind is called {\it symplectic} if after scalar extension to a splitting field, it becomes adjoint to an alternating bilinear form.
Otherwise, it is called {\it orthogonal}.
According to \cite[(2.6 (2)]{knus}, $\sigma$ is orthogonal if and only if $1\notin\alt(A,\sigma)$.
We denote the discriminant of an orthogonal involution $\sigma$ by $\disc\sigma$ (see \cite[(7.2)]{knus}).
An algebra with involution $(A,\sigma)$ (or the involution $\sigma$ itself) is called {\it isotropic} if there exists a nonzero element $x\in A$ such that $\sigma(x)x=0$.
Otherwise, it is called {\it anisotropic}.

A {\it quaternion algebra} over a field $F$ is a four-dimensional central simple $F$-algebra.
Every quaternion algebra $Q$ has a basis $(1,u,v,w)$, called a {\it quaternion basis}, satisfying
$u^2+u\in F$, $v^2\in F^{\times}$ and $w=uv=vu+v$ (see \cite[p. 25]{knus}).
In this case, if $a=u^2+u\in F$ and $b=v^2\in F^\times$, then $Q$ is denoted by $[a,b)_F$.
If $b\in F^{\times2}$ or $a\in\wp(F):=\{x^2+x\mid x\in F\}$, then $Q$ splits.
Also, it is readily verified that every element $v'\in Q\setminus F$ satisfying $v^{\prime2}\in F^\times$ extends to a quaternion basis $(1,u',v',w')$ of $Q$.

Let $V$ be a finite-dimensional vector space over a field $F$.
A symmetric bilinear form $\mathfrak{b}:V\times V\rightarrow F$ is called {\it metabolic} if there exists a subspace $W$ of $V$ with $\dim_FW=\frac{1}{2}\dim_FV$ such that $\mathfrak{b}|_{W\times W}=0$.
We say that two bilinear forms $\mathfrak{b}$ and $\mathfrak{b}'$ are {\it similar} if $\mathfrak{b}\simeq\lambda\mathfrak{b}'$ for some $\lambda\in F^\times$.
For $\alpha\in F$ the isometry class of the symmetric bilinear form $\mathfrak{b}((x_1,x_2),(y_1,y_2))=x_1y_1+\alpha x_2y_2$ on $F^2$ is denoted by $\langle 1,\alpha\rangle$.
Also, for $\alpha_1,\cdots,\alpha_n\in F^\times$, the form $\langle 1,\alpha_1\rangle\otimes\cdots\otimes\langle 1,\alpha_n\rangle$ is called a {\it bilinear ($n$-fold) Pfister form} and is denoted by $\lla \alpha_1,\cdots,\alpha_n\rra$.
Finally, if $K/F$ is a field extension and $\mathfrak{b}$ is a bilinear form over $F$, the scalar extension of $\mathfrak{b}$ to $K$ is denoted by $\mathfrak{b}_K$.

\section{Inseparable descent}\label{sec-ins}
We begin our discussion with a definition from \cite{mn2}.
\begin{defi}
An algebra $A$ over a field $F$ is called a {\it totally singular conic algebra} if $x^2\in F$ for every $x\in R$.
\end{defi}
According to \cite[(3.2)]{mn2}, every totally singular conic algebra is a local commutative algebra.

\begin{defi}
An algebra with involution $(A,\sigma)$ over a field $F$ is called {\it totally decomposable} if it is isomorphic to a tensor product of quaternion $F$-algebras with involution.
\end{defi}
Note that if $\sigma$ is orthogonal and $(A,\sigma)\simeq\bigotimes_{i=1}^n(Q_i,\sigma_i)$ is a decomposition of $(A,\sigma)$ into quaternion $F$-algebras with involution, then every $\sigma_i$ is necessarily orthogonal by \cite[(2.21)]{knus}.

Let $(A,\sigma)$ be a totally decomposable algebra of degree $2^n$ with orthogonal invo\-lution over $F$.
In \cite{mn2}, it was shown that there exists a $2^n$-dimensional totally singular conic $F$-algebra $\Phi(A,\sigma)\subseteq F\oplus\alt(A,\sigma)$ which is maximal commutative (i.e., $C_A(\Phi(A,\sigma))=\Phi(A,\sigma)$) and is generated, as an $F$-algebra, by $n$ elements.
Also, according to \cite[(5.10)]{mn2} the subalgebra $\Phi(A,\sigma)$ is uniquely determined, up to isomorphism.
By \cite[(4.6)]{mn2} we have $\Phi(A,\sigma)=F[v_1,\cdots,v_n]$ for some $v_1,\cdots,v_n\in\alt(A,\sigma)$.

\begin{lem}\label{new}
Let $K/F$ be a finite field extension satisfying $K^2\subseteq F$ and let $(Q,\sigma)$ be a quaternion algebra with orthogonal involution over $K$.
For every $v\in\alt(Q,\sigma)$ satisfying $v^2\in F^\times$, there exists a descent $(Q_0,\sigma_0)$ of $(Q,\sigma)$ to $F$ such that $v\in\alt(Q_0,\sigma_0)$.
\end{lem}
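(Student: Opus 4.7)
The plan is to adjust the given $v$ to a quaternion basis $(1,u,v,uv)$ of $Q$ over $K$ whose other generator $u$ has $u^2+u\in F$ and is fixed by $\sigma$, and to then take the descent to be the $F$-span $Q_0:=F\cdot 1+Fu+Fv+F\cdot uv$ with the restricted involution. First I would observe that $v\notin K$: orthogonality of $\sigma$ gives $1\notin\alt(Q,\sigma)$, whence $\alt(Q,\sigma)\cap K=0$. Applying the quaternion basis extension recalled in the preliminaries to $v\in Q\setminus K$ with $v^2\in K^\times$ then produces $u_0\in Q$ so that $(1,u_0,v,u_0v)$ is a quaternion basis of $Q$ over $K$ with $u_0^2+u_0=a\in K$, $v^2=b\in F^\times$, and $u_0v=vu_0+v$. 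Replacing $u_0$ by $u:=u_0+a$ preserves the quaternion relations (since $a\in K$ is central) and yields $u^2+u=a+a^2+a=a^2$, which lies in $F$ by the hypothesis $K^2\subseteq F$.

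The heart of the argument will be to show $\sigma(u)=u$. Since $\alt\subseteq\sym$ in characteristic~$2$, $\sigma(v)=v$ is automatic. For an orthogonal involution on a quaternion algebra in characteristic~$2$ the alternator $\alt(Q,\sigma)$ is one-dimensional over $K$ (as one verifies after scalar extension to a splitting field, where an orthogonal involution on $M_2$ has one-dimensional alternator), so $\alt(Q,\sigma)=Kv$ and $\sigma(u)+u=\eta v$ for some $\eta\in K$. Applying $\sigma$ to the identity $u^2+u=a^2\in F$ yields $(u+\eta v)^2+(u+\eta v)=a^2$; expanding the left-hand side using $v^2=b$ and $uv+vu=v$ collapses it to $a^2+\eta^2 b$, so $\eta^2 b=0$ and hence $\eta=0$.

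Once $\sigma(u)=u$ is established, I would check that $Q_0$ is an $F$-subalgebra of $Q$ isomorphic to the quaternion $F$-algebra $[a^2,b)_F$, that the multiplication map $Q_0\otimes_F K\to Q$ is a $K$-algebra isomorphism because $(1,u,v,uv)$ is simultaneously an $F$-basis of $Q_0$ and a $K$-basis of $Q$, and that $\sigma$ preserves $Q_0$ (indeed $\sigma(u)=u$, $\sigma(v)=v$, and $\sigma(uv)=\sigma(v)\sigma(u)=vu=uv+v\in Q_0$). The restricted involution $\sigma_0:=\sigma|_{Q_0}$ is then orthogonal, with $\alt(Q_0,\sigma_0)=Fv$ as a direct computation on the basis shows, so in particular $v\in\alt(Q_0,\sigma_0)$, completing the descent. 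I expect the main obstacle to be the minimal-polynomial calculation forcing $\eta=0$: it combines in an essential way the one-dimensionality of $\alt$ for orthogonal quaternion involutions in characteristic~$2$, the quaternion relation $uv+vu=v$, and the hypothesis $K^2\subseteq F$ that places $u^2+u$ in $F$ so that $\sigma$ may be applied to the minimal polynomial directly.
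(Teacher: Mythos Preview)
Your argument is correct and gives a self-contained proof of what the paper merely cites from \cite[(6.1)]{mn2}. One small clarification: the identity $\sigma(u)=u$ does not actually depend on $K^2\subseteq F$. Since $\sigma$ is of the first kind it already fixes all of $K$, so your minimal-polynomial computation goes through with $u_0$ in place of $u$; the replacement $u:=u_0+a$ is needed only to make $u^2=u+a^2$ lie in $Q_0$, i.e.\ to ensure that the $F$-span is multiplicatively closed. For comparison, the paper establishes $\sigma(u)=u$ by a different short computation in Lemma~\ref{usym}: writing $\sigma(u)-u=cv$ and evaluating $\sigma(uv)=v(u+cv)=uv+v+cb$ forces $cb\in\alt(Q,\sigma)\cap K=0$, hence $c=0$. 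Either route works; yours has the mild advantage of avoiding any appeal to $\sigma(uv)$.
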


\begin{proof}
See \cite[(6.1)]{mn2}.
\end{proof}

\begin{lem}\label{insep}
Let $K/F$ be a finite field extension satisfying $K^2\subseteq F$ and let $(A,\sigma)$ be a totally decomposable algebra  with orthogonal involution over $K$.
Then $(A,\sigma)$ has a totally decomposable descent to $F$ if and only if $x^2\in F$ for every $x\in \Phi(A,\sigma)$.
\end{lem}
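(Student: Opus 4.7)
The plan splits into the two implications of the lemma.

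For the direction ``totally decomposable descent $\Rightarrow$ squares in $F$'', I would start from a descent $(A,\sigma)\simeq(B,\sigma')\otimes_F(K,\id)$ with $(B,\sigma')$ totally decomposable over $F$ and identify $\Phi(A,\sigma)$ with $\Phi(B,\sigma')\otimes_FK$. The ingredients are: scalar extension commutes with the alternator, so $\alt(A,\sigma)=\alt(B,\sigma')\otimes_FK$; the subalgebra $\Phi(B,\sigma')\otimes_FK$ is commutative of dimension $2^n$, lies in $K\oplus\alt(A,\sigma)$, and is totally singular conic over $K$ because for $y=\sum b_i\otimes k_i$ one has $y^2=\sum b_i^2k_i^2\otimes1\in F\subseteq K$ (using $b_i^2\in F$, $k_i^2\in F$, and commutativity in characteristic two). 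The uniqueness statement \cite[(5.10)]{mn2} then identifies this subalgebra with $\Phi(A,\sigma)$, and the same calculation shows every square in $\Phi(A,\sigma)$ lies in $F$.

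For the converse I would argue by induction on $n$, where $\deg A=2^n$. By \cite[(4.6)]{mn2}, fix generators $v_1,\dots,v_n\in\alt(A,\sigma)$ of $\Phi(A,\sigma)$ with $v_i^2\in K^\times$; the standing hypothesis then forces $v_i^2\in F^\times$. The base case $n=1$ is exactly Lemma \ref{new} applied to $v_1$. For the inductive step, I would appeal to the structural results of \cite{mn2} to arrange a decomposition $(A,\sigma)\simeq(Q_1,\sigma_1)\otimes_K(C,\tau)$ with $v_1\in\alt(Q_1,\sigma_1)$, where $Q_1$ is a quaternion $K$-algebra and both $\sigma_1,\tau$ are orthogonal. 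Lemma \ref{new} then descends $(Q_1,\sigma_1)$ to an $F$-algebra with involution $(Q_{1,0},\sigma_{1,0})$ with $v_1\in\alt(Q_{1,0},\sigma_{1,0})$. Expecting the multiplicativity $\Phi(A,\sigma)\simeq\Phi(Q_1,\sigma_1)\otimes_K\Phi(C,\tau)$, so that $\Phi(C,\tau)$ embeds into $\Phi(A,\sigma)$, the squares-in-$F$ condition is inherited by $\Phi(C,\tau)$, and the inductive hypothesis yields a totally decomposable descent $(B',\tau_0)$ of $(C,\tau)$ to $F$. Then $(Q_{1,0},\sigma_{1,0})\otimes_F(B',\tau_0)$ is the desired totally decomposable descent of $(A,\sigma)$.

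The main obstacle is the aligned decomposition used in the inductive step: producing a quaternion tensor factor $Q_1$ of $(A,\sigma)$ that contains the chosen generator $v_1$. In characteristic two this requires finding a companion $u_1\in A$ with $u_1^2+u_1\in K$ and $u_1v_1=v_1u_1+v_1$ such that the resulting quaternion subalgebra is actually a tensor factor on which $\sigma$ restricts orthogonally, and this is where one must lean on the structure theory of totally decomposable orthogonal involutions from \cite{mn2}. Once this alignment, together with the corresponding multiplicativity of $\Phi$, is secured, the remaining assembly of the descent is routine.
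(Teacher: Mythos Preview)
Your approach is correct and matches the paper's. For the direction ``squares in $F$ $\Rightarrow$ descent'' the paper simply invokes the proof of \cite[(6.2)]{mn2}, and your inductive outline (peel off a quaternion factor containing $v_1$ via the structure theory of \cite{mn2}, descend it with Lemma~\ref{new}, use multiplicativity of $\Phi$ as in \cite[(5.13)]{mn2}, and recurse) is exactly the content of that cited argument; for the converse, both you and the paper use the identification $\Phi(A,\sigma)\simeq\Phi(B,\sigma')\otimes_FK$ together with $K^2\subseteq F$.
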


\begin{proof}
If $x^2\in F$ for every $x\in\Phi(A,\sigma)$, then the proof of \cite[(6.2)]{mn2} shows that $(A,\sigma)$ has a totally decomposable descent to $F$.
Conversely, if $(B,\sigma')$ is a totally decomposable descent of $(A,\sigma)$ to $F$, then $\Phi(A,\sigma)\simeq\Phi(B,\sigma')\otimes_FK$ as $K$-algebras.
Since $K^2\subseteq F$ and $v^2\in F$ for every $v\in\Phi(B,\sigma')$, we have $x^2\in F$ for every $x\in \Phi(A,\sigma)$.
\end{proof}

\begin{defi}
Let $(A,\sigma)\simeq\bigotimes_{i=1}^n(Q_i,\sigma_i)$ be a totally decomposable algebra with orthogonal involution over a field $F$ and let $\alpha_i\in F^\times$ be a representative of the class $\disc\sigma_i\in F^\times/F^{\times2}$.
As in \cite{dolphin}, we call the bilinear Pfister form $\lla\alpha_1,\cdots,\alpha_n\rra$, the {\it Pfister invariant} of $(A,\sigma)$ and we denote it by $\mathfrak{Pf}(A,\sigma)$.
\end{defi}
Note that according to \cite[(7.5)]{dolphin}, the isometry class of $\mathfrak{Pf}(A,\sigma)$ does not depend on the decomposition of $(A,\sigma)$.
Also, as observed in \cite[(5.5)]{mn2}, the algebra $\Phi(A,\sigma)$ can be considered as an underlying vector space of $\mathfrak{Pf}(A,\sigma)$ in such a way that for $v,w\in\Phi(A,\sigma)$, $\mathfrak{Pf}(A,\sigma)(v,w)$ is the unique element $\alpha\in F$ satisfying $vw+\alpha\in \alt(A,\sigma)$.
In particular, $\mathfrak{Pf}(A,\sigma)(v,v)=v^2$ for every $v\in\Phi(A,\sigma)$.

The next result complements \cite[(6.2)]{mn2} for quadratic extensions.
\begin{thm}
Let $K=F(\sqrt{\alpha})$ be a quadratic extension of a field $F$.
For a totally decomposable algebra with orthogonal involution $(A,\sigma)$ over $K$ the following conditions are equivalent:
\begin{itemize}
  \item [(1)] $(A,\sigma)$ has a descent to $F$.
  \item [(2)] $(A,\sigma)$ has a totally decomposable descent to $F$.
  \item [(3)] $x^2\in F$ for every $x\in\Phi(A,\sigma)$.
  \item[(4)] $\mathfrak{Pf}(A,\sigma)\simeq\mathfrak{b}_K$ for some symmetric bilinear form $\mathfrak{b}$ over $F$.
\end{itemize}
\end{thm}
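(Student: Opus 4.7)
I will establish the equivalence through the implications $(2)\Leftrightarrow(3)$, $(2)\Rightarrow(1)$, $(2)\Rightarrow(4)$, $(4)\Rightarrow(3)$, and $(1)\Rightarrow(3)$. Since $K=F(\sqrt{\alpha})$ satisfies $K^2\subseteq F$ in characteristic~$2$, the equivalence $(2)\Leftrightarrow(3)$ is precisely Lemma~\ref{insep}, and $(2)\Rightarrow(1)$ is tautological. For $(2)\Rightarrow(4)$: given a totally decomposable descent $(B,\sigma')\simeq\bigotimes_{i=1}^n(Q_i^0,\sigma_i^0)$, scalar extension yields a decomposition of $(A,\sigma)$ over $K$ whose factor discriminants already lie in the image of $F^\times/F^{\times 2}$, so that $\mathfrak{Pf}(A,\sigma)\simeq\mathfrak{Pf}(B,\sigma')_K$.

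For $(4)\Rightarrow(3)$, I use that a bilinear Pfister form represents~$1$, hence is non-alternating and therefore diagonalizable in characteristic~$2$; fix an orthogonal $F$-basis $\{e_1,\dots,e_{2^n}\}$ of $\mathfrak{b}$. Via the identification of $\Phi(A,\sigma)$ with the underlying $K$-space of $\mathfrak{Pf}(A,\sigma)\simeq\mathfrak{b}_K$ from \cite[(5.5)]{mn2}, an element $v\in\Phi(A,\sigma)$ corresponds to some $\sum_ik_ie_i$ with $k_i\in K$, and bilinearity in characteristic~$2$ gives
\[ v^2=\mathfrak{Pf}(A,\sigma)(v,v)=\sum_ik_i^2\,\mathfrak{b}(e_i,e_i)\in F, \]
since $k_i^2\in K^2\subseteq F$ and $\mathfrak{b}(e_i,e_i)\in F$.

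The final and most substantial implication is $(1)\Rightarrow(3)$. Suppose $(A,\sigma)\simeq(B,\sigma')\otimes_F(K,\id)$ with $(B,\sigma')$ possibly not totally decomposable. Identifying $A=B\otimes_FK$, one has $\alt(A,\sigma)=\alt(B,\sigma')\otimes_FK$. For $x\in\Phi(A,\sigma)\subseteq K\oplus\alt(A,\sigma)$, write $x=y+z\sqrt{\alpha}$ with $y,z\in F\oplus\alt(B,\sigma')$; a direct computation yields $x^2=(y^2+\alpha z^2)+(yz+zy)\sqrt{\alpha}$, so condition~(3) is equivalent to the vanishing of the commutator $yz+zy\in B$ for every such decomposition. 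I plan to proceed by induction on $n$, where $\deg A=2^n$. The base case $n=1$ is immediate, since any four-dimensional central simple $F$-algebra is a quaternion algebra and the descent $(B,\sigma')$ is then already totally decomposable. In the inductive step, I would locate $v\in\alt(B,\sigma')$ with $v^2\in F^\times$ generating the alternating space of a quaternion tensor factor in some decomposition of $(A,\sigma)$ over $K$, apply Lemma~\ref{new} to descend that factor to a quaternion $F$-subalgebra $(Q_0,\sigma_0)\subseteq B$, and then apply the induction hypothesis to the centralizer $C_B(Q_0)$.

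The principal obstacle is precisely this inductive step: exhibiting, inside the fixed $F$-form $B$, an alternating element compatible with a chosen tensor decomposition of $(A,\sigma)$ over $K$. Because decompositions are highly non-canonical and the descent $(B,\sigma')$ is not \emph{a~priori} compatible with any particular one, realigning them is the delicate heart of the argument; I expect it to rely on the uniqueness up to isomorphism of $\Phi(A,\sigma)$ from \cite[(5.10)]{mn2} together with the classification of totally decomposable orthogonal involutions by their Pfister invariant from \cite[(6.5)]{mn2}.
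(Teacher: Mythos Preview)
Your handling of $(2)\Leftrightarrow(3)$, $(2)\Rightarrow(1)$, $(2)\Rightarrow(4)$, and $(4)\Rightarrow(3)$ is fine and matches the paper in spirit (the paper's $(4)\Rightarrow(3)$ avoids diagonalization by simply writing $h(v)=u\otimes1+w\otimes\sqrt{\alpha}$ and observing that the cross terms $\sqrt{\alpha}\,\mathfrak{b}(u,w)+\sqrt{\alpha}\,\mathfrak{b}(w,u)$ cancel by symmetry in characteristic~$2$, but your version works too).

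The gap is in $(1)\Rightarrow(3)$: you have the right setup but then abandon it for an induction you yourself flag as problematic. No induction is needed. After writing $x=y+z\sqrt{\alpha}$ with $y,z\in F\oplus\alt(B,\sigma')$ and computing $x^2=(y^2+\alpha z^2)+(yz+zy)\sqrt{\alpha}$, finish directly as follows. First, $x^2\in K$ because $\Phi(A,\sigma)$ is a totally singular conic $K$-algebra; hence $yz+zy\in F$. Second, writing $y=a+y'$, $z=b+z'$ with $a,b\in F$ and $y',z'\in\alt(B,\sigma')$, the scalar parts commute, so $yz+zy=y'z'+z'y'=y'z'-\sigma'(y'z')\in\alt(B,\sigma')$. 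Third, $\sigma'$ is orthogonal (since $\sigma$ is), so $1\notin\alt(B,\sigma')$ and therefore $F\cap\alt(B,\sigma')=0$. Thus $yz+zy=0$ and $x^2\in F$. This is exactly the paper's argument (carried out there for the generators $v_i$ and then extended to all of $\Phi(A,\sigma)$ by commutativity and $K^2\subseteq F$). Your proposed inductive descent via Lemma~\ref{new} and centralizers is both unnecessary and, as you note, incomplete: aligning a fixed $F$-form $B$ with a chosen $K$-decomposition of $A$ is genuinely hard, and nothing in the cited results furnishes the required alternating element in $B$.
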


\begin{proof}
The equivalence of $(2)$ and $(3)$ follows from (\ref{insep}).
The implication $(2)\Rightarrow(1)$ is evident and $(2)\Rightarrow(4)$ follows by setting $\mathfrak{b}=\mathfrak{Pf}(C,\tau)$, where $(C,\tau)$ is a totally decomposable descent of $(A,\sigma)$.
It suffices therefore to prove the implications $(1)\Rightarrow(3)$ and $(4)\Rightarrow(3)$.

$(1)\Rightarrow(3)$:
Let $(B,\sigma')$ be a descent of $(A,\sigma)$ and let
\[g:(A,\sigma)\iso(B,\sigma')\otimes_F(K,\id),\]
be an isomorphism of $K$-algebras with involution.
Write $\Phi(A,\sigma)=F[v_1,\cdots,v_n]$ for some $v_1,\cdots,v_n\in \alt(A,\sigma)$.
Since $g(\alt(A,\sigma))=\alt(B,\sigma')\otimes_FK$, there exist $u_i,w_i\in\alt(B,\sigma')$ such that $g(v_i)=u_i\otimes1+w_i\otimes\sqrt\alpha$, $i=1,\cdots,n$.
Hence
\[g(v_i)^2=(u_i^2+\alpha w_i^2)\otimes1+(u_iw_i+w_iu_i)\otimes\sqrt{\alpha}.\]
Since $v_i^2\in K$, the element $g(v_i)^2$ lies in the center of $B\otimes_FK$, i.e., $u_i^2+\alpha w_i^2\in F$ and $u_iw_i+w_iu_i\in F$.
We have $u_iw_i+w_iu_i=u_iw_i-\sigma(u_iw_i)\in\alt(B,\sigma')$.
The orthogonality of $\sigma'$ implies that $1\notin\alt(B,\sigma')$, hence $u_iw_i+w_iu_i=0$.
It follows that $g(v_i)^2=(u_i^2+\alpha w_i^2)\otimes1\in F\otimes_FF$, i.e., $v_i^2\in F$.
Since $K^2\subseteq F$ and $\Phi(A,\sigma)$ is commutative, we get $x^2\in F$ for every $x\in\Phi(A,\sigma)$.

$(4)\Rightarrow(3)$:
Let $V$ be an underlying vector space of $\mathfrak{b}$ and consider an isometry \[h:(\Phi(A,\sigma),\mathfrak{Pf}(A,\sigma))\iso(V\otimes_FK,\mathfrak{b}_K).\]
Let $v\in\Phi(A,\sigma)$ and write $h(v)=u\otimes1+w\otimes\sqrt\alpha$ for some $u,w\in V$.
Then
\begin{align*}
v^2&=\mathfrak{Pf}(A,\sigma)(v,v)=\mathfrak{b}_K(h(v),h(v))=\mathfrak{b}_K(u\otimes1+w\otimes\sqrt\alpha,u\otimes1+w\otimes\sqrt\alpha)\\
&=\mathfrak{b}(u,u)+\sqrt{\alpha}\mathfrak{b}(u,w)+\sqrt{\alpha}\mathfrak{b}(w,u)+\alpha\mathfrak{b}(w,w)
=\mathfrak{b}(u,u)+\alpha\mathfrak{b}(w,w)\in F,
\end{align*}
which completes the proof.
\end{proof}

\section{Separable descent of quaternion algebras}\label{sec-co}
From now on we fix a separable quadratic extension $K/F$ and two elements $\eta\in K$ and $\delta\in F\setminus \wp(F)$ such that $K=F(\eta)$ and $\eta^2+\eta=\delta$.
The {\it trace} of an element $x\in K$ over $F$ is denoted by $T_{K/F}(x)$.
Hence $T_{K/F}(a+b\eta)=b$ for $a,b\in F$.

Let $A$ be a central simple algebra over $K$ and let $\iota$ be the nontrivial automorphism of $K/F$.
Define the {\it conjugate} algebra $^\iota A=\{^\iota x\mid x\in A\}$ with the operations
\[^\iota x+{^\iota y}= {^\iota}(x+y),\quad ^\iota x^\iota y={^\iota}(xy)\quad {\rm and}\quad ^\iota(\alpha x)=\iota(\alpha){^\iota x},\]
for $x,y\in A$ and $\alpha\in K$.
Let $s:{^\iota}A\otimes_KA\rightarrow{^\iota}A\otimes_KA$ be the switch map induced by $s({^\iota}x\otimes y)={^\iota} y\otimes x$ for $x,y\in A$.
The {\it corestriction} of $A$ is defined as follows:
\[{\co}_{K/F}(A)=\{u\in{^\iota}A\otimes_KA\mid s(u)=u\}.\]
According to \cite[(3.13 (4))]{knus}, $\co_{K/F}(A)$ is a central simple algebra over $F$.

\begin{lem}\label{sc}
A quaternion $K$-algebra $Q$ has a descent to $F$ if and only if $\co_{K/F}(Q)$ splits.
\end{lem}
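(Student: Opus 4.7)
\medskip\noindent\textbf{Proof plan.}
The plan is to prove the two implications separately, using a Brauer group calculation for one direction and a Galois descent datum for the other.

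For $(\Rightarrow)$, suppose $Q\simeq Q_0\otimes_F K$ for a quaternion $F$-algebra $Q_0$. The restriction--corestriction identity $\co_{K/F}\circ\mathrm{res}_{K/F}=[K:F]\cdot\id$ on Brauer groups gives $[\co_{K/F}(Q)]=2\,[Q_0]$ in $\mathrm{Br}(F)$. Since $Q_0$ is a quaternion algebra its class has order at most $2$, so $\co_{K/F}(Q)$ is Brauer trivial and hence split.

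For $(\Leftarrow)$, assume $\co_{K/F}(Q)$ is split. By the Albert--Riehm--Scharlau theorem \cite[(3.1)]{knus}, $Q$ carries an involution of the second kind $\tau$, that is, an $F$-linear anti-automorphism with $\tau^{2}=\id$ and $\tau|_{K}=\iota$. Let $\gamma$ denote the canonical (symplectic) involution of $Q$, characterised by $\gamma(x)+x=\trd(x)$ for every $x\in Q$, and set $\varphi=\gamma\tau$. As a composition of two anti-automorphisms, $\varphi$ is an $F$-algebra automorphism of $Q$; by construction $\varphi|_{K}=\iota$. The crux is to show $\varphi^{2}=\id$. I would argue that $\tau\gamma\tau$ is a first-kind involution of $Q$ (anti-multiplicative, of order $2$, and restricting to $\iota^{2}=\id$ on $K$), and moreover symplectic: from $\alt(Q,\tau\gamma\tau)=\tau(\alt(Q,\gamma))$ together with $\tau(1)=1$, the condition $1\in\alt$ is preserved. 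Since the symplectic involution on a quaternion algebra is unique (it is forced by $\sigma(x)+x=\trd(x)$), we conclude $\tau\gamma\tau=\gamma$, so $\gamma$ and $\tau$ commute and $\varphi^{2}=\gamma^{2}\tau^{2}=\id$.

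Now $\varphi$ is an $F$-algebra automorphism of $Q$ of order $2$ which acts on the centre $K$ as the non-trivial element of $\gal(K/F)$; Galois descent for the separable quadratic extension $K/F$ then furnishes a quaternion $F$-algebra $Q_{0}:=\fix(\varphi)$ with $Q_{0}\otimes_{F}K\simeq Q$, which is the desired descent. The main obstacle is to cite cleanly the Albert--Riehm--Scharlau theorem and to pin down the uniqueness of the symplectic involution on $Q$; once these are in place the remaining steps are formal.
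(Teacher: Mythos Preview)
Your argument is correct. The paper proves this lemma by a bare citation to \cite[(2.22)]{knus} and \cite[Ch.~8, (9.5)]{schar}, whereas you unpack a direct proof. The content, however, is essentially the same: \cite[(2.22)]{knus} is precisely the statement that a quaternion $K$-algebra carries a unitary $K/F$-involution if and only if it descends to a quaternion $F$-algebra, and its proof there is exactly your Galois descent argument via $\varphi=\gamma\tau$, using uniqueness of the symplectic involution on a quaternion algebra to force $\varphi^{2}=\id$. The Scharlau reference (and the Albert--Riehm--Scharlau theorem you invoke) supplies the link between splitting of $\co_{K/F}(Q)$ and the existence of such a $\tau$. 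So your proof is a faithful expansion of the cited results; it buys self-containment at the price of length, while the paper's two-line citation keeps the exposition lean for what is, in this context, a standard fact.
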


\begin{proof}
The result follows from \cite[(2.22)]{knus} and \cite[Ch. 8, (9.5)]{schar}.
\end{proof}

The following result can be found in \cite[Ch. I, Exercise 9]{knus}.

\begin{lem}\label{knus}
If $a\in K$ and $b\in F^\times$, then
$\co_{K/F}([a,b)_K)\sim[T_{K/F}(a),b)_F$, where $\sim$ denotes Brauer-equivalence.
\end{lem}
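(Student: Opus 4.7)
The plan is to realize $[T_{K/F}(a),b)_F$ as an explicit quaternion subalgebra of $\co_{K/F}([a,b)_K)$ and then to split off a matrix factor by exhibiting a nonzero nilpotent in its centralizer. Fix a quaternion basis $(1,u,v,w)$ of $Q=[a,b)_K$ with $u^2+u=a$, $v^2=b$ and $uv=vu+v$, so that ${^\iota}Q$ has the analogous basis $(1,{^\iota}u,{^\iota}v,{^\iota}w)$ with $({^\iota}u)^2+{^\iota}u=\iota(a)$. Inside ${^\iota}Q\otimes_K Q$, set
$$U:={^\iota}u\otimes 1+1\otimes u,\qquad V:=\eta\cdot(1\otimes v)+\iota(\eta)\cdot({^\iota}v\otimes 1).$$
A direct check, using that $s$ is semi-$K$-linear with respect to $\iota$ and that $\eta+\iota(\eta)=T_{K/F}(\eta)=1$, shows $s(U)=U$ and $s(V)=V$, so $U,V\in\co_{K/F}(Q)$. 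One then verifies
$$U^2+U=\iota(a)+a=T_{K/F}(a),\quad V^2=(\eta+\iota(\eta))^2\,b=b,\quad UV+VU=V,$$
the last identity reducing to $uv+vu=v$ after the cross terms involving ${^\iota}u\otimes v$ and ${^\iota}v\otimes u$ cancel. Hence $F\la U,V\ra$ is a quaternion subalgebra of $\co_{K/F}(Q)$ isomorphic to $[T_{K/F}(a),b)_F$.

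Since $\co_{K/F}(Q)$ has degree $4$ over $F$, the double centralizer theorem gives $\co_{K/F}(Q)\simeq F\la U,V\ra\otimes_F C$ for some quaternion $F$-algebra $C$. To finish the proof I would show that $C$ is split by producing a nonzero nilpotent in it, namely $E:={^\iota}v\otimes v+b$. Clearly $s(E)=E$; the commutations $EU=UE$ and $EV=VE$ again reduce to $uv+vu=v$ combined with routine scalar moves; and $E^2=({^\iota}v\otimes v)^2+b^2=b^2+b^2=0$ in characteristic two, with $E\neq 0$. Since a quaternion division algebra has no zero divisors, $C$ must split, and therefore $\co_{K/F}(Q)\sim[T_{K/F}(a),b)_F$.

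The main obstacle is guessing the correct form of $V$: the naive candidates ${^\iota}v\otimes 1+1\otimes v$ and ${^\iota}v\otimes v$ give $V^2=0$ or $V^2=b^2$ respectively, and one must insert the coefficients $\eta$ and $\iota(\eta)$ precisely so that the cross terms in $V^2$ vanish in characteristic two while $(\eta+\iota(\eta))^2=1$ reduces $V^2$ to exactly $b$. Once this ansatz is found, every remaining verification is a mechanical manipulation of the tensor-product multiplication together with the characteristic two identities $u^2+u=a$, $v^2=b$ and $uv+vu=v$.
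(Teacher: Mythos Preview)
Your argument is correct. The paper itself does not prove this lemma at all: it simply records it as a fact found in \cite[Ch.~I, Exercise~9]{knus} and moves on. By contrast, you give an explicit construction of the quaternion subalgebra $[T_{K/F}(a),b)_F$ inside $\co_{K/F}(Q)$ and then split off the complementary factor via a concrete nilpotent. All of your verifications go through as stated; in particular, writing $V={^\iota}(\eta v)\otimes 1+1\otimes(\eta v)$ makes the computations of $s(V)$, $V^2$, and $UV+VU$ transparent, and the commutation of $E={^\iota}v\otimes v+b$ with $U$ and $V$ reduces cleanly to $uv+vu=v$. Your approach has the advantage of being self-contained and of exhibiting the Brauer equivalence by an explicit tensor decomposition rather than by appeal to an external exercise; the price is the somewhat delicate bookkeeping with the twisted $K$-action on ${^\iota}Q$, which you handle correctly.
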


\begin{lem}\label{wp}
Let $Q=[a,b)_E$ be a quaternion algebra over a field $E$.
If $b\notin E^{\times2}$, then $Q$ splits if and only if $a+c^2 b\in\wp(E)$ for some $c\in E$.
\end{lem}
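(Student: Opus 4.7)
My plan is to work inside $Q$ with a quaternion basis $(1,u,v,w)$ satisfying $u^{2}=u+a$, $v^{2}=b$, and (in characteristic two) $uv+vu=v$, and to argue both directions directly from these relations; the key computational point is that, for $x\in Q$ of the form $d+u+cv$ with $c,d\in E$, the quantity $x^{2}+x$ lies in $E$ and evaluates to $(a+c^{2}b)+(d^{2}+d)$.

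For $(\Leftarrow)$, given $a+c^{2}b=d^{2}+d$, I would set $x:=d+u+cv$. Because $d$ is central and the characteristic is two, squaring is additive on $d$ and $u+cv$, so
\[
x^{2}=d^{2}+u^{2}+c(uv+vu)+c^{2}v^{2}=d^{2}+u+a+cv+c^{2}b,
\]
hence $x^{2}+x=(a+c^{2}b)+(d^{2}+d)=0$. Since $x$ has $u$-coefficient $1$, it does not lie in $E$, so neither $x$ nor $x+1$ is zero and $x(x+1)=0$ produces a nontrivial zero divisor; $Q$ is therefore split.

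For $(\Rightarrow)$, assume $Q$ is split. If $a\in\wp(E)$ we are done with $c=0$, so assume $a\notin\wp(E)$. Then $F_{0}:=E[u]\simeq E[X]/(X^{2}+X+a)$ is a separable quadratic field extension of $E$ and $Q$ admits the cyclic presentation $Q=F_{0}\oplus F_{0}v$ with $v^{2}=b$ and $vt=\bar t\,v$, the bar denoting the nontrivial $E$-automorphism $u\mapsto u+1$ of $F_{0}$. The classical splitting criterion for such a degree-two cyclic algebra then yields $b\in N_{F_{0}/E}(F_{0}^{\times})$, i.e.\ $b=s_{0}^{2}+s_{0}s_{1}+s_{1}^{2}a$ for some $s_{0},s_{1}\in E$; the hypothesis $b\notin E^{\times 2}$ rules out $s_{1}=0$, and dividing the identity by $s_{1}^{2}$ rearranges it into $a+c^{2}b=d^{2}+d$ with $c:=1/s_{1}$ and $d:=s_{0}/s_{1}$. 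The only non-elementary step is this norm characterization; the hypothesis $b\notin E^{\times 2}$ enters precisely to ensure $s_{1}\neq 0$, making the final division legitimate.
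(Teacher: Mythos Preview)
Your proof is correct and, for the direction $(\Rightarrow)$, essentially identical to the paper's: the paper cites \cite[(98.14)(5)]{elman} to obtain that $Q$ splits if and only if $b$ is a norm from the \'etale quadratic algebra $E_a=E[X]/(X^2+X+a)$, writes $b=\beta^2+\beta\lambda+\lambda^2 a$, uses $b\notin E^{\times2}$ to force $\lambda\neq0$, and divides by $\lambda^2$ --- your $(s_0,s_1)$ are the paper's $(\beta,\lambda)$, and your separate treatment of the case $a\in\wp(E)$ is absorbed in the paper's use of the \'etale (rather than field) extension $E_a$.

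The one genuine difference is in $(\Leftarrow)$: the paper handles both implications at once via the cited norm criterion, whereas you construct an explicit zero divisor $x=d+u+cv$ satisfying $x^2+x=0$. This is more elementary --- no external reference is needed for that half --- and it makes concretely visible how the hypothesis $a+c^2b\in\wp(E)$ produces a nontrivial idempotent inside $Q$. The paper's approach is shorter and symmetric; yours is self-contained for one direction.
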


\begin{proof}
Consider the quadratic \'etale extension $E_a:=E[X]/(X^2+X+a)$ of $E$.
By \cite[(98.14) (5)]{elman}, $Q$ splits if and only if $b$ is a norm in $E_a$, or equivalently
$b=\beta^2+\beta\lambda+\lambda^2a$ for some $\beta,\lambda\in E$.
As $b\notin E^{\times2}$, we have $\lambda\neq0$.
Hence $Q$ splits if and only if $a+\lambda^{-2}b=\lambda^{-1}\beta+(\lambda^{-1}\beta)^2\in\wp(E)$.
\end{proof}

\begin{cor}\label{cor}
Let $[a,b)_E$ and $[c,b)_E$ be quaternion algebras over a field $E$.
If $b\notin E^{\times2}$, then $[a,b)_E\simeq[c,b)_E$ if and only if $a+c+d^2b\in\wp(E)$ for some $d\in E$.
\end{cor}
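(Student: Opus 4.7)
The plan is to reduce both directions to a direct manipulation of quaternion bases, using the hypothesis $b\notin E^{\times 2}$ to rigidify how square roots of $b$ sit inside a quaternion algebra.

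For the forward implication I would take an isomorphism $Q:=[a,b)_E\simeq[c,b)_E$ and fix quaternion bases $(1,u,v,w)$ and $(1,u',v',w')$ realizing the two presentations inside the common algebra $Q$. The subalgebras $E[v]$ and $E[v']$ are both isomorphic to the (purely inseparable) quadratic field extension $E(\sqrt b)$; by Skolem--Noether they are conjugate in $Q$. A short check using $b\notin E^{\times 2}$ shows that $v'$ is the \emph{unique} element of $E[v']$ whose square is $b$, so in fact some inner automorphism of $Q$ sends $v$ to $v'$. After replacing the first basis by its conjugate, I may assume both bases share the same element $v$, and that $u,u'$ both satisfy $vxv^{-1}=x+1$.

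At this point the difference $u+u'$ centralises $v$ and so lies in $E[v]$; writing $u+u'=e+dv$ with $e,d\in E$ and expanding $(u+e+dv)^2+(u+e+dv)$ in characteristic $2$ (using the relation $uv+vu=v$) gives $c=a+(e^2+e)+d^2b$, i.e.\ $a+c+d^2b=e^2+e\in\wp(E)$. The converse is immediate in the same coordinates: from $a+c+d^2b=e^2+e$ the element $u':=u+e+dv$ of $[a,b)_E$ satisfies $u'^2+u'=c$, $v^2=b$, and $u'v=vu'+v$, so $(1,u',v,u'v)$ is a quaternion basis of $[a,b)_E$ exhibiting an isomorphism with $[c,b)_E$.

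The only delicate step is the Skolem--Noether reduction to a common $v$: it relies on $b\notin E^{\times 2}$ twice, to guarantee that $E[v]$ is a field (hence a simple subalgebra, so that the theorem applies) and that $v'$ is pinned down uniquely inside $E[v']$ by the equation $x^2=b$. The remaining characteristic-$2$ expansion is routine and I would not spell it out in detail. An alternative route would be to combine Lemma \ref{wp} with the Brauer-group identity $[a,b)_E\otimes_E[c,b)_E\sim[a+c,b)_E$, but the elementary argument above is completely self-contained.
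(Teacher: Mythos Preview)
Your argument is correct. The Skolem--Noether reduction works exactly as you say: since $b\notin E^{\times 2}$, the subalgebra $E[v]\simeq E(\sqrt b)$ is a field, hence simple, and in characteristic $2$ the equation $x^2=b$ has a unique solution in that field, so the inner automorphism really sends $v$ to $v'$. The subsequent computation $u'^2+u'=a+(e^2+e)+d^2b$ is routine, and the converse is immediate.

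However, your approach is \emph{not} the one the paper takes. The paper's proof is precisely the ``alternative route'' you mention in your last paragraph: it invokes the biadditivity of the symbol $[a,b)_E\otimes_E[c,b)_E\sim[a+c,b)_E$ (this is the content of \cite[Ch.~8, (11.1)]{schar}) to reduce to the splitting of $[a+c,b)_E$, and then applies Lemma~\ref{wp}. So the paper's argument is a two-line citation, while yours is a self-contained manipulation of quaternion bases. Your version has the advantage of avoiding external references and of making the element $d$ explicit (as the $v$-coefficient of $u+u'$), which is occasionally useful; the paper's version is shorter and places the result in its natural Brauer-group context. Since the paper already has Lemma~\ref{wp} available and cites Scharlau elsewhere, its choice is the more economical one here, but there is nothing wrong with yours.
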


\begin{proof}
The result follows from (\ref{wp}) and \cite[Ch. 8, (11.1)]{schar}.
\end{proof}

\begin{lem}\label{usym}
Let $(Q,\sigma)$ be a quaternion algebra with orthogonal involution over a field $E$ and let $(1,u,v,w)$  be a quaternion basis of $Q$.
If $v\in\alt(Q,\sigma)$, then $\sigma(u)=u$.
\end{lem}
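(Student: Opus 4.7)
The plan is to set $y:=\sigma(u)+u\in\alt(Q,\sigma)$ and prove $y=0$, which is equivalent to $\sigma(u)=u$ in characteristic~$2$.

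First I would produce a key algebraic identity involving $y$ and $u$. Applying $\sigma$ to $u^2+u=a$ and using $\sigma|_E=\id$ gives $\sigma(u)^2+\sigma(u)=a$. Substituting $\sigma(u)=u+y$, expanding, and subtracting $u^2+u=a$ causes the scalar terms to cancel and leaves the identity
\[
y^2+y=uy+yu.
\]

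Next I would identify $y$ as an element of $Ev$. The cleanest route uses the fact that for a quaternion algebra with orthogonal involution in characteristic~$2$ the space $\alt(Q,\sigma)$ is one-dimensional over $E$; since $v\in\alt(Q,\sigma)$ is nonzero, $\alt(Q,\sigma)=Ev$, so $y=cv$ for some $c\in E$. If one wants to rely only on facts stated in the excerpt, one can expand $y=\alpha+\beta u+\gamma v+\delta w$ in the quaternion basis, use $\sigma^2=\id$ together with $\sigma(v)=v$ (coming from $v\in\alt(Q,\sigma)\subseteq\sym(Q,\sigma)$) to constrain the coefficients, and invoke the orthogonality criterion $1\notin\alt(Q,\sigma)$---which yields $\alt(Q,\sigma)\cap E=0$---to eliminate the scalar and $w$-components and land at $y\in Ev$.

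Finally, substituting $y=cv$ into the identity and using $v^2=b$ together with the quaternion relation $uv+vu=v$ (from $w=uv=vu+v$) yields $c^2b+cv=c(uv+vu)=cv$, whence $c^2b=0$. Since $b\in E^\times$, $c=0$ and therefore $\sigma(u)=u$. The delicate step is the middle one, namely pinning $y$ to the line $Ev$; once that is done, the computation with the key identity is essentially a one-liner.
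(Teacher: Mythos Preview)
Your proof is correct. Both your argument and the paper's begin identically: set $y=\sigma(u)+u\in\alt(Q,\sigma)$ and use that $\alt(Q,\sigma)$ is one-dimensional with $v\in\alt(Q,\sigma)$ to write $y=cv$. The divergence is in how $c=0$ is forced. The paper computes $\sigma(uv)=v\sigma(u)=v(u+cv)=vu+cb=uv+v+cb$, so $v+cb\in\alt(Q,\sigma)$, hence $cb\in\alt(Q,\sigma)\cap E$; orthogonality ($1\notin\alt(Q,\sigma)$) then gives $cb=0$. Your route instead exploits the $\sigma$-invariance of the relation $u^2+u=a$ to obtain $y^2+y=uy+yu$, and after substituting $y=cv$ and using $uv+vu=v$ this collapses to $c^2b=0$. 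Your computation is marginally cleaner in that the final step does not appeal to $1\notin\alt(Q,\sigma)$; the orthogonality hypothesis is used only (and implicitly) to ensure $\alt(Q,\sigma)=Ev$ rather than $E$. The paper's computation, on the other hand, is a touch more direct and avoids the detour through the identity for $y^2+y$. Both are short and rely on the same structural facts.
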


\begin{proof}
Since $\sigma(u)-u\in\alt(Q,\sigma)$ and $\dim_E\alt(Q,\sigma)=1$, there exists $a\in E$ such that $\sigma(u)-u=a v$.
Hence $\sigma(uv)=v(u+a v)=uv+v+a b$, where $b=v^2\in F^{\times}$.
It follows that $v+ab\in\alt(Q,\sigma)$, i.e., $ab\in\alt(Q,\sigma)$.
As $\sigma$ is orthogonal, we get $a=0$.
\end{proof}

The next result is easily deduced from \cite[(7.4)]{knus}.
\begin{lem}\label{disc}
Let $E$ be a field and let $t$ be the transpose involution on $M_2(E)$.
For a quaternion algebra with orthogonal involution $(Q,\sigma)$ over $E$ we have $(Q,\sigma)\simeq(M_2(E),t)$ if and only if $\disc\sigma$ is trivial.
\end{lem}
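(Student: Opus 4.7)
The plan is to reduce to an explicit quaternion-basis computation, using the fact (at the heart of the cited \cite[(7.4)]{knus}) that for an orthogonal involution $\sigma$ on a quaternion $E$-algebra $Q$, the class $\disc\sigma\in E^\times/E^{\times2}$ is represented by $v^2$ for any $v\in\alt(Q,\sigma)$ with $v^2\in E^\times$ (such a $v$ automatically extends to a quaternion basis $(1,u,v,w)$ of $Q$).

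For the direction $(\Rightarrow)$, I would exhibit an explicit quaternion basis of $M_2(E)$ adapted to $t$: take $u=e_{22}$ (so $u^2+u=0$) and $v=e_{12}+e_{21}$ (so $v^2=1$ and $v=e_{12}-t(e_{12})\in\alt(M_2(E),t)$). It follows that $\disc t$ is represented by $v^2=1$, hence is trivial.

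For $(\Leftarrow)$, assume $\disc\sigma$ is trivial and pick $v\in\alt(Q,\sigma)$ with $v^2\in E^\times$ extending to a quaternion basis $(1,u,v,w)$. By hypothesis $v^2=c^2$ for some $c\in E^\times$, so replacing $v$ by $c^{-1}v$ I may assume $v^2=1$. I would then define an $E$-algebra map by sending $u\mapsto e_{22}$ and $v\mapsto e_{12}+e_{21}$; a direct check that the images satisfy the relations $u^2+u=0$, $v^2=1$ and $uv=vu+v$ shows this extends to an isomorphism $\varphi:Q\iso M_2(E)$ (by simplicity of $Q$ and a dimension count). To see that $\varphi$ intertwines $\sigma$ and $t$, note that $\sigma(u)=u$ by Lemma \ref{usym}, while $\sigma(v)=v$ since any element of $\alt(Q,\sigma)$ is symmetric in characteristic two; the images $e_{22}$ and $e_{12}+e_{21}$ are both $t$-symmetric, so $\sigma$ and $t$ agree on a generating set of $Q$ and hence coincide.

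The only conceptual step is the identification $\disc\sigma\leftrightarrow v^2\cdot E^{\times2}$ supplied by \cite[(7.4)]{knus}; once that is in hand, everything reduces to the short verification above in $M_2(E)$, so I do not expect any genuine obstacle.
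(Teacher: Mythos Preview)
Your forward direction is correct. In the converse there is a small but genuine gap: after normalizing so that $v^2=1$, you extend $v$ to a quaternion basis $(1,u,v,w)$ and then send $u\mapsto e_{22}$, $v\mapsto e_{12}+e_{21}$. But the relation that $u$ satisfies in $Q$ is $u^2+u=a$ for some $a\in E$, whereas $e_{22}^2+e_{22}=0$; so your assignment defines an algebra homomorphism only when $a=0$, which you have not arranged. The fix is to first replace $u$ by $u'=u+av+a$: one still has $u'v+vu'=v$, and using $v^2=1$, $uv+vu=v$ and $\car E=2$ one computes $(u')^2=u^2+av$, whence $(u')^2+u'=(u^2+u)+a=a+a=0$. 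Since $u,v,a\in\sym(Q,\sigma)$ we also get $\sigma(u')=u'$, so the appeal to Lemma~\ref{usym} is unaffected and the rest of your argument goes through verbatim with $u'$ in place of $u$. The paper itself gives no details beyond citing \cite[(7.4)]{knus}, so with this patch your explicit computation is a legitimate direct proof.
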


\begin{prop}\label{quaternion}(Compare \cite[(2.4)]{dherte})
A quaternion algebra with orthogonal involution $(Q,\sigma)$ over $K$ has a descent to $F$ if and only if
$\co_{K/F}(Q)$ splits and there exists $v\in\alt(Q,\sigma)$ such that $v^2\in F^\times$.
In addition, if $(Q_0,\sigma_0)$ is a descent of $(Q,\sigma)$ to $F$, then $v\in\alt(Q_0,\sigma_0)$.
\end{prop}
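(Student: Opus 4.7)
The plan is to handle the two directions and the ``in addition'' clause separately, with the reverse direction carrying the main computation.

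For the forward direction, suppose $(Q,\sigma)\simeq(Q_0,\sigma_0)\otimes_F(K,\id)$. Splitting of $\co_{K/F}(Q)$ is immediate from Lemma~\ref{sc}. The existence of $v$ rests on the auxiliary fact that every nonzero $v_0\in\alt(Q_0,\sigma_0)$ satisfies $v_0^2\in F^\times$: indeed $\trd(v_0)=0$ (because $\alt\subseteq\sym$ in characteristic two and involutions preserve reduced trace), so $v_0^2=\nrd(v_0)\in F$; and if this vanished, $Q_0$ would split with $v_0$ conjugate to $E_{12}\in M_2(F)$, whereupon $\sigma_0(E_{12})=\sigma_0(E_{11}E_{12})=\sigma_0(E_{12})\sigma_0(E_{11})$ combined with $\sigma_0(E_{11})\equiv E_{11}\pmod{F\cdot E_{12}}$ would force $E_{12}=0$, a contradiction. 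Taking $v:=v_0\otimes 1\in\alt(Q_0,\sigma_0)\otimes_F K=\alt(Q,\sigma)$ provides the required element. For the ``in addition'' clause, write $v=\lambda v_0$ with $\lambda\in K$ and $v_0$ an $F$-generator of $\alt(Q_0,\sigma_0)$; then $\lambda^2=v^2/v_0^2\in F^\times$, and expanding $\lambda=s+t\eta$ gives $\lambda^2=(s^2+t^2\delta)+t^2\eta$, forcing $t=0$ and hence $v\in Fv_0=\alt(Q_0,\sigma_0)$.

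For the reverse direction, set $b=v^2\in F^\times$ and extend $v$ to a quaternion basis $(1,u,v,uv)$ of $Q$ with $u^2+u=a\in K$. Lemma~\ref{usym} gives $\sigma(u)=u$, whence $\sigma(v)=v$ and $\sigma(uv)=uv+v$ in characteristic two. The strategy is to replace $u$ by $u_0:=u+c\eta v+\beta\eta$ for suitable $c,\beta\in F$ and take the descent to be $Q_0:=F\la u_0,v\ra$. A direct expansion in characteristic two, using $uv+vu=v$, yields
\[
u_0^2+u_0=a+c^2\eta^2 b+\beta^2\eta^2+\beta\eta,
\]
whose $\eta$-coefficient (after substituting $\eta^2=\eta+\delta$) equals $T_{K/F}(a)+c^2b+\beta^2+\beta$. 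By Lemma~\ref{knus}, $\co_{K/F}(Q)\sim[T_{K/F}(a),b)_F$, and its splitting provides $c,\beta\in F$ with $T_{K/F}(a)+c^2b=\beta^2+\beta$ via Lemma~\ref{wp} when $b\notin F^{\times2}$, and trivially (from $\wp(F)+F^2=F$) when $b\in F^{\times2}$.

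With these choices, $u_0^2+u_0\in F$, and $Q_0:=F\la u_0,v\ra$ is a homomorphic image of the simple $F$-algebra $[u_0^2+u_0,b)_F$, hence quaternion. The identity $u=u_0+c\eta v+\beta\eta$ shows that $Q_0\otimes_F K$ contains $1,u,v,uv$, so $Q_0\otimes_F K=Q$. Moreover $\sigma(u_0)=u_0$ (using $\sigma|_K=\id$ together with $\sigma(u)=u$ and $\sigma(v)=v$), so $\sigma(Q_0)=Q_0$; orthogonality of $\sigma_0:=\sigma|_{Q_0}$ is inherited from $\sigma$ since $1\notin\alt(Q,\sigma)$. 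By construction $v\in\alt(Q_0,\sigma_0)$. The principal obstacle is the explicit expansion of $u_0^2+u_0$ above and the translation of the corestriction hypothesis into the Artin--Schreier equation $T_{K/F}(a)+c^2b\in\wp(F)$; once these are in place, everything else is essentially formal.
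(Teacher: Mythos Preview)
Your proof is correct and follows essentially the same approach as the paper: both replace $u$ by $u+c\eta v+\beta\eta$ (the paper's $u'=u+e\eta v+d\eta$) and use Lemmas~\ref{knus} and~\ref{wp} to solve for the coefficients, then check $\sigma$-invariance via Lemma~\ref{usym}. Your treatment is slightly more streamlined in that you handle the case $b\in F^{\times2}$ uniformly via $\wp(F)+F^2=F$ (the paper invokes Lemma~\ref{disc} separately), and you verify $Q_0\otimes_FK=Q$ directly by simplicity and dimension rather than through Corollary~\ref{cor}.
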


\begin{proof}
If $Q\simeq_K Q_0\otimes_F K$ for some $\sigma$-invariant quaternion $F$-algebra $Q_0\subseteq Q$, then $\co_{K/F}(Q)$ splits by (\ref{sc}).
Also for every unit $v\in\alt(Q_0,\sigma|_{Q_0})$ we have $v^2\in F^\times$.
Conversely, let $b=v^2\in F^\times$.
If $b\in F^{\times2}$, then by (\ref{disc}) we have
\[(Q,\sigma)\simeq_K(M_2(K),t)\simeq_K(M_2(F),t)\otimes_F(K,\id),\] and we are done.
Suppose that $b\notin F^{\times2}$.
Extend $v$ to a quaternion basis $(1,u,v,w)$ of $Q$ and set $a=u^2+u\in K$.
Then $Q\simeq[a,b)_K$.
Write $a=\beta+\lambda\eta$ for some $\beta,\lambda\in F$.
By (\ref{knus}), $\co_{K/F}(Q)\sim[T_{K/F}(a),b)_F$, thus $[\lambda,b)_F$ splits.
By (\ref{wp}) there exist $d,e\in F$ such that $\lambda=d^2+d+e^2b$.
Set $u'=u+e\eta v+d\eta\in Q$ and $c=\beta+d^2\delta+e^2\delta b\in F$.
Then $u'v+vu'=v$ and $u^{\prime2}+u'=c$.
Hence the $F$-algebra $Q_0$ generated by $u'$ and $v$ is a quaternion algebra, isomorphic to $[c,b)_F$.
We also have
\begin{align*}
a+c&=(\beta+\lambda\eta)+(\beta+d^2\delta+e^2\delta b)=(d^2+d+e^2b)\eta+d^2\delta+e^2\delta b \\
&=d^2\eta+d^2\delta+d\eta+e^2\eta b+e^2\delta b=(d\eta)^2+d\eta+(e\eta)^2b.
\end{align*}
Hence $a+c+(e\eta)^2b\in\wp(K)$.
Using (\ref{cor}) we get $[c,b)_K\simeq[a,b)_K$, i.e., $Q\simeq_K Q_0\otimes_FK$.
Note that $(1,u',v,u'v)$ is a quaternion basis of $[c,b)_K$, hence $\sigma(u')=u'$ by (\ref{usym}).
It follows that $\sigma(Q_0)=Q_0$, i.e., $(Q,\sigma)\simeq_K(Q_0,\sigma|_{Q_0})\otimes_F(K,\id)$.

To prove the last assertion of the result, observe that as $\alt(Q,\sigma)$ is one-dimensional, for every unit $w\in\alt(Q_0,\sigma_0)$ we have $\alt(Q,\sigma)=Kw$.
Hence one can write $v=\alpha w$ for some $\alpha\in K^\times$.
Since $v^2,w^2\in F^\times$ and $K^{\times2}\cap F^\times=F^{\times2}$, we get $\alpha\in F^\times$, hence $v\in\alt(Q_0,\sigma_0)$.
\end{proof}

\section{Some descent properties of totally singular conic algebras}\label{sec-conic}
We recall that $K=F(\eta)$ is a separable quadratic extension of $F$ with $\eta^2+\eta=\delta\in F\setminus\wp(F)$.

\begin{lem}\label{trace}
Let $(A,\sigma)$ be a totally decomposable algebra with orthogonal invo\-lution over $K$.
Suppose that $\mathfrak{Pf}(A,\sigma)\simeq\lla\alpha_1,\cdots,\alpha_n\rra$ for some $\alpha_1,\cdots,\alpha_n\in K^\times$.
If $(A,\sigma)$ has a descent to $F$, then $\mathfrak{Pf}(A,\sigma)\simeq\lla\iota(\alpha_1),\cdots,\iota(\alpha_n)\rra$, where $\iota$ is the nontrivial automorphism of $K/F$.
\end{lem}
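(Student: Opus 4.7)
My plan is to use the descent hypothesis to produce a $K$-isomorphism between $(A,\sigma)$ and its Galois conjugate ${}^\iota(A,\sigma)$, then read off the Pfister invariant of the conjugate directly from the given decomposition of $(A,\sigma)$. The two resulting expressions for $\mathfrak{Pf}(A,\sigma)$ will coincide, giving the claim.

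First I would set up the isomorphism $(A,\sigma)\simeq_K {}^\iota(A,\sigma)$. Fixing a descent $(A,\sigma)\simeq_K(B,\sigma')\otimes_F(K,\id)$, one checks that the map $b\otimes k\mapsto{}^\iota(b\otimes\iota(k))$ is a $K$-algebra isomorphism
\[(B,\sigma')\otimes_F(K,\id)\iso{}^\iota\bigl((B,\sigma')\otimes_F(K,\id)\bigr)\]
commuting with the tensor-product involution $\sigma'\otimes\id$. Composing with the given descent isomorphism yields the desired $K$-isomorphism of algebras with involution. This step is where the descent hypothesis is genuinely used; without it, ${}^\iota(A,\sigma)$ need not be $K$-isomorphic to $(A,\sigma)$.

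Next I would compute $\mathfrak{Pf}({}^\iota(A,\sigma))$ directly from a decomposition $(A,\sigma)\simeq\bigotimes_{i=1}^n(Q_i,\sigma_i)$ with $\disc\sigma_i\equiv\alpha_i\pmod{K^{\times 2}}$. Conjugation commutes with $K$-tensor products, so ${}^\iota(A,\sigma)\simeq\bigotimes_{i=1}^n {}^\iota(Q_i,\sigma_i)$. For each $i$, choose a unit $v_i\in\alt(Q_i,\sigma_i)$ with $v_i^2=\alpha_i$; the same element still lies in $\alt({}^\iota Q_i,{}^\iota\sigma_i)$, but the identity $v_i\cdot v_i=\alpha_i\cdot 1$ read in the twisted $K$-action on ${}^\iota Q_i$ becomes $v_i^2=\iota(\alpha_i)\cdot 1$. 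Thus $\disc({}^\iota\sigma_i)\equiv\iota(\alpha_i)\pmod{K^{\times 2}}$, so ${}^\iota(A,\sigma)$ is totally decomposable with
\[\mathfrak{Pf}({}^\iota(A,\sigma))\simeq\lla\iota(\alpha_1),\ldots,\iota(\alpha_n)\rra.\]

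Finally, combining these two steps with the well-definedness of the Pfister invariant under $K$-isomorphism of algebras with involution (\cite[(7.5)]{dolphin}) forces $\lla\alpha_1,\ldots,\alpha_n\rra\simeq\lla\iota(\alpha_1),\ldots,\iota(\alpha_n)\rra$, as required. The main subtlety is the discriminant calculation in the second step: one must carefully track how the twisted $K$-scalar structure on ${}^\iota Q_i$ converts $\alpha_i$ into $\iota(\alpha_i)$, since the underlying ring and the subspace $\alt({}^\iota Q_i,{}^\iota\sigma_i)=\alt(Q_i,\sigma_i)$ are unchanged. The verification in the first step is essentially formal once the conjugate algebra is unpacked, and the final comparison is immediate.
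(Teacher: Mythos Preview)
Your proposal is correct and is essentially the same argument as the paper's, repackaged in the conjugate-algebra language. The paper constructs the $\iota$-semilinear $F$-automorphism $h=f^{-1}\circ(\id_B\otimes\iota)\circ f$ of $(A,\sigma)$ and transports a decomposition $\bigotimes(Q_i,\sigma_i)$ (obtained from \cite[(3.1)]{me} with $\disc\sigma_i=\alpha_iK^{\times2}$) to a new decomposition $\bigotimes(h(Q_i),\sigma|_{h(Q_i)})$ inside $A$, checking that $h(x_i)^2=\iota(\alpha_i)$; your $K$-isomorphism $(A,\sigma)\iso{}^\iota(A,\sigma)$ is precisely this $h$ composed with the tautological identification $A\to{}^\iota A$, and your discriminant computation for ${}^\iota(Q_i,\sigma_i)$ is the same ``$v_i^2=\alpha_i$ becomes $\iota(\alpha_i)$'' step. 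The only point you leave implicit is the existence of a decomposition with the prescribed discriminants $\alpha_i$, which both you and the paper need \cite[(3.1)]{me} for.
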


\begin{proof}
Let $(B,\sigma')$ be a descent of $(A,\sigma)$ to $F$ and consider an isomorphism of $K$-algebras with involution
\[f:(A,\sigma)\iso(B,\sigma')\otimes_F(K,\id).\]
For $x\in B$ and $\alpha\in K$, the assignment $x\otimes\alpha\mapsto x\otimes\iota(\alpha)$ induces an $F$-algebra isomorphism $g:B\otimes_FK\rightarrow B\otimes_FK$ satisfying $g\circ(\sigma'\otimes\id)=(\sigma'\otimes\id)\circ g$.
The composition
\[h=f^{-1}\circ g\circ f:A\rightarrow A,\]
is therefore an $F$-algebra isomorphism satisfying $h\circ\sigma=\sigma\circ h$.
By \cite[(3.1)]{me}, there exists a decomposition $(A,\sigma)\simeq\bigotimes_{i=1}^n(Q_i,\sigma_i)$ into quaternion $K$-algebras with orthogonal involution such that
\[\disc\sigma_i=\alpha_iK^{\times2}\in\penalty 0 K^\times/K^{\times2},\quad i=1,\cdots,n.\]
Set $Q'_i=h(Q_i)$, $i=1,\cdots,n$.
We claim that every $Q'_i$ is a $\sigma$-invariant quaternion $K$-subalgebra of $A$.
Let $(1,u_i,v_i,w_i)$ be a quaternion basis of $Q_i$ and set $u'_i=h(u_i)$, $v'_i=h(v_i)$ and $w'_i=h(w_i)$.
Since $h(\alpha)=\iota(\alpha)\in K$ for every $\alpha\in K$, the condition $u_i^2+u_i\in K$ implies that $u_i^{\prime2}+u'_i\in K$.
Similarly, as $h(\alpha)=\iota(\alpha)\in K^\times$ for every $\alpha\in K^\times$, we have $v_i^{\prime2}\in K^\times$.
The equality $u_iv_i+v_iu_i=v_i$ implies that $u'_iv'_i+v'_iu'_i=v'_i$.
Hence, $Q_i'$ is a quaternion $K$-algebra with a quaternion basis $(1,u'_i,v'_i,w'_i)$ for $i=1,\cdots,n$.
We also have
\[\sigma(Q'_i)=\sigma(h(Q_i))=h(\sigma(Q_i))=h(Q_i)=Q'_i,\]
proving the claim.
Now, choose $x_i\in\alt(Q_i,\sigma_i)$ with $x_i^2=\alpha_i$, $i=1,\cdots,n$.
Then $h(x_i)\in\alt(Q'_i,\sigma|_{Q'_i})$ satisfies $h(x_i)^2=\iota(\alpha_i)$, hence $\disc\sigma|_{Q'_i}=\iota(\alpha_i)K^{\times2}$.
We also have $(A,\sigma)\simeq\bigotimes_{i=1}^n(Q'_i,\sigma|_{Q'_i})$, which implies in particular that
$\mathfrak{Pf}(A,\sigma)\penalty 0\simeq\lla\iota(\alpha_1),\cdots,\iota(\alpha_n)\rra$.
\end{proof}

\begin{prop}\label{an}
Let $(A,\sigma)$ be a totally decomposable algebra with orthogonal involution over $K$.
If $(A,\sigma)$ has a descent to $F$, then there exists a totally singular conic $F$-algebra $S\subseteq\Phi(A,\sigma)$ for which $\Phi(A,\sigma)\simeq S\otimes_FK$ as $K$-algebras.
\end{prop}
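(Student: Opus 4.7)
The plan is to exploit the semi-linear $F$-algebra automorphism $h: A \to A$ constructed in the proof of Lemma \ref{trace}: if $f: (A,\sigma) \iso (B,\sigma') \otimes_F (K,\id)$ realizes the hypothesized descent, set $h := f^{-1} \circ g \circ f$ with $g(x \otimes \alpha) := x \otimes \iota(\alpha)$. This $h$ satisfies $h^2 = \id$, $h \circ \sigma = \sigma \circ h$, $h|_K = \iota$, and $h(\alpha x) = \iota(\alpha) h(x)$ for $\alpha \in K$ and $x \in A$; in particular it preserves $\alt(A,\sigma)$ and the subspace $K \oplus \alt(A,\sigma)$.

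The central step is to establish $h(\Phi(A,\sigma)) = \Phi(A,\sigma)$ as subsets of $A$. Because $h$ is a ring automorphism respecting $K$ and $\alt(A,\sigma)$, the image $h(\Phi(A,\sigma))$ is again a $2^n$-dimensional totally singular conic $K$-subalgebra of $A$ lying in $K \oplus \alt(A,\sigma)$, maximal commutative, and generated by $n$ elements. The uniqueness of $\Phi(A,\sigma)$ from \cite[(5.10)]{mn2} pins down the isomorphism class, and Lemma \ref{trace} (supplying $\mathfrak{Pf}(A,\sigma) \simeq \lla \iota(\alpha_1),\dots,\iota(\alpha_n) \rra$) removes the obstruction at the level of the Pfister invariant; together they should permit us to identify the two copies as subsets, possibly after replacing $f$ by $f \circ \psi$ for a suitable $\psi \in \mathrm{Aut}(A,\sigma)$ or adjusting $h$ by an inner automorphism of $(A,\sigma)$.

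With preservation in hand, $h|_{\Phi(A,\sigma)}$ is an $\iota$-semi-linear $F$-algebra involution on $\Phi := \Phi(A,\sigma)$. Set
\[S := \{x \in \Phi \mid h(x) = x\},\]
an $F$-subalgebra of $\Phi$. By standard separable Galois descent for commutative $K$-algebras, every $x \in \Phi$ admits a unique expression $x = s + \eta t$ with $s, t \in S$, so $\Phi = S \oplus \eta S$ as $F$-modules and the natural multiplication map $S \otimes_F K \iso \Phi$ is a $K$-algebra isomorphism. For any $x \in S$, $x^2 \in K$ (since $x \in \Phi$) and $h(x^2) = h(x)^2 = x^2$, hence $x^2 \in K^h = F$, showing that $S$ is totally singular conic over $F$.

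The main obstacle is precisely the preservation claim in the second paragraph: the uniqueness of $\Phi(A,\sigma)$ in \cite{mn2} is stated only up to isomorphism, so upgrading it to an identity of subsets is where the real work lies. A plausible route is a cocycle-type adjustment of $h$ via inner automorphisms of $(A,\sigma)$, leveraging the conjugacy of maximal commutative $\Phi$-type subalgebras implicit in the proof of \cite[(5.10)]{mn2}; Lemma \ref{trace} is what makes the relevant cocycle a coboundary.
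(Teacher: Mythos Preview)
Your outline has a genuine gap, and you have correctly located it: the claim that $h$ can be arranged to preserve $\Phi(A,\sigma)$ \emph{as a subset of $A$}. The uniqueness result \cite[(5.10)]{mn2} gives only the isomorphism class, and the ``cocycle-type adjustment'' you propose would require showing that any two realizations of $\Phi(A,\sigma)$ inside $K\oplus\alt(A,\sigma)$ are conjugate by an element of $\mathrm{Aut}(A,\sigma)$, \emph{and} that the resulting conjugation is compatible with the condition $h^2=\id$. More seriously, an $h$-stable $\Phi(A,\sigma)$ amounts to an $F$-subalgebra $S\subseteq B$ with $S\otimes_FK=\Phi(A,\sigma)$; but the descent $(B,\sigma')$ is not assumed totally decomposable, so there is no $\Phi(B,\sigma')$ to aim for, and producing such an $S$ inside $B$ is essentially what the whole chain $(1)\Rightarrow(4)$ in Theorem~\ref{main} is about. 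Your argument, as it stands, begs that question.

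The paper sidesteps the set-level preservation entirely by working in the abstract isomorphism class of $\Phi(A,\sigma)$. From a decomposition one gets $v_i$ with $v_i^2=\alpha_i=b_i+c_i\eta$; Lemma~\ref{trace} gives $\mathfrak{Pf}(A,\sigma)\simeq\lla\iota(\alpha_1),\dots,\iota(\alpha_n)\rra$, and then \cite[(5.6)]{mn2} produces $v_i'\in\alt(A,\sigma)$ with $v_i'^{2}=\iota(\alpha_i)=b_i+c_i+c_i\eta$ and $\Phi(A,\sigma)\simeq K[v_1',\dots,v_n']$. Identifying both families inside one copy of $\Phi(A,\sigma)$ (legitimate, since all that is used is the $K$-algebra structure and the squares), one sets
\[
u_i=v_i+v_i',\qquad w_i=(1+\eta)v_i+\eta v_i',
\]
and checks $u_i^2=c_i\in F$, $w_i^2=b_i+c_i\delta\in F$, and $v_i=\eta u_i+w_i$. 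Then $S=F[u_1,\dots,u_n,w_1,\dots,w_n]\subseteq\Phi(A,\sigma)$ is totally singular conic over $F$ and $S\otimes_FK\simeq\Phi(A,\sigma)$. The point is that Lemma~\ref{trace} is used not to make $h$ behave well on a fixed subset, but to guarantee via \cite[(5.6)]{mn2} that elements with squares $\iota(\alpha_i)$ already live in (an isomorphic copy of) $\Phi(A,\sigma)$, after which an explicit averaging produces the $F$-form.
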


\begin{proof}
Let $(A,\sigma)\simeq\bigotimes_{i=1}^n(Q_i,\sigma_i)$ be a decomposition of $(A,\sigma)$ into quaternion $K$-algebras with involution.
Let $v_i\in\alt(Q_i,\sigma_i)$ be a unit and set $\alpha_i=v_i^2\in K^\times$, $i=1,\cdots,n$.
Then $\mathfrak{Pf}(A,\sigma)\simeq\lla\alpha_1,\cdots,\alpha_n\rra$ and $\Phi(A,\sigma)\simeq K[v_1,\cdots,v_n]$.
Write
$\alpha_i=b_i+c_i\eta$ for some $b_i,c_i\in F$, $i=1,\cdots,n$.
By (\ref{trace}) we have
\[\mathfrak{Pf}(A,\sigma)\simeq\lla b_1+c_1 +c_1\eta,\cdots,b_n+c_n+c_n\eta\rra.\]
By \cite[(5.6)]{mn2}, there exists $v'_i\in \alt(A,\sigma)$ such that $v_i^{\prime2}=b_i+c_i+ c_i\eta$, $i=1,\cdots,n$, and $\Phi(A,\sigma)\simeq K[v'_1,\cdots,v'_n]$.
For $i=1,\cdots,n$, we may identify $v_i$ (resp. $v'_i$) with an element of $\Phi(A,\sigma)$.
Set
\begin{align*}
u_i=v_i+v'_i\in\Phi(A,\sigma)\quad {\rm and}\quad
w_i=(1+\eta)v_i+\eta v'_i\in\Phi(A,\sigma),
\end{align*}
so that
$u_i^2=c_i\in F$ and $w_i^2=b_i+c_i\delta\in F$, $i=1,\cdots,n$.
Set $S=F[u_1,\cdots,u_n,w_1,\cdots,w_{n}]$.
Then $S$ is a totally singular conic $F$-algebra and
\[S\otimes_FK\simeq K[u_1,\cdots,u_n,w_1,\cdots,w_{n}]\subseteq \Phi(A,\sigma).\]
On the other hand, as $v_i=\eta u_i+w_i$, the algebra $S\otimes_FK$ contains a copy of $v_i$ for $i=1,\cdots,n$.
Hence $\Phi(A,\sigma)\simeq S\otimes_FK$.
\end{proof}

\begin{rem}\label{m}
According to \cite[(3.2)]{mn2}, every totally singular conic $F$-algebra $S$ is a local algebra and its unique maximal ideal is $\mathfrak{m}=\{x\in S\mid x^2=0\}$.
It follows that the unique maximal ideal of $S\otimes_FK$ is $\mathfrak{m}\otimes_FK$.
In fact, every $x\in S\otimes_FK$ can be written as $x=u\otimes1+v\otimes\eta$ for some $u,v\in S$, so that $x^2=(u^2+\delta v^2)\otimes1+v^2\otimes\eta$.
Hence, $x^2=0$ if and only if $u^2=v^2=0$, or equivalently, $x\in\mathfrak{m}\otimes_FK$.
\end{rem}

\begin{lem}\label{desphi}
Let $R$ be a totally singular conic algebra over $K$.
Suppose that there exists a totally singular conic $F$-algebra $S$ such that $R\simeq S\otimes_FK$.
If $E$ is a maximal subfield of $R$ containing $K$ and $L$ is a maximal subfield of $S$ containing $F$, then $E\simeq L\otimes_FK$.
\end{lem}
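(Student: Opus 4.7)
The plan is to identify both $E$ and $L\otimes_F K$ with the residue field of $R$ at its maximal ideal, reducing the statement to the assertion that any maximal subfield of a local totally singular conic algebra containing the base field projects isomorphically onto the residue field. By Remark~\ref{m}, $R$ is local with $\mathfrak{m}_R=\mathfrak{m}_S\otimes_F K$, and the canonical projection $R\twoheadrightarrow R/\mathfrak{m}_R$ gives a $K$-algebra isomorphism $R/\mathfrak{m}_R\simeq (S/\mathfrak{m}_S)\otimes_F K$.

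The main step is to show that the composite $E\hookrightarrow R\twoheadrightarrow R/\mathfrak{m}_R$ is a $K$-algebra isomorphism. Injectivity is immediate since $E$ is a field and $\mathfrak{m}_R$ consists of nilpotents. For surjectivity, I would argue by contradiction: suppose some $x\in R$ lies outside $E+\mathfrak{m}_R$. Since $x^2\in K\subseteq E$, the $E$-subalgebra $E[x]=E+Ex\subseteq R$ is $2$-dimensional over $E$ and strictly larger than $E$. I claim $E[x]$ is a subfield, contradicting the maximality of $E$. For a nonzero $y=e_0+e_1 x\in E[x]$, commutativity of $R$ and characteristic $2$ yield $y^2=e_0^2+e_1^2 x^2\in K$. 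If $y^2=0$ with $e_1\neq 0$, then $x^2=(e_0/e_1)^2$ in $K$, so $(x+e_0/e_1)^2=0$, placing $x$ in $E+\mathfrak{m}_R$ and contradicting the choice of $x$; if $e_1=0$ then $e_0^2=0$ forces $e_0=0$. Hence $y^2\in K^\times$ and $y^{-1}=(y^2)^{-1}y\in E[x]$, so $E[x]$ is a field.

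The same argument, applied to the pair $(L,S)$, gives an $F$-algebra isomorphism $L\simeq S/\mathfrak{m}_S$. Combining yields
\[E\;\simeq\; R/\mathfrak{m}_R\;\simeq\; (S/\mathfrak{m}_S)\otimes_F K\;\simeq\; L\otimes_F K,\]
as desired. The main obstacle is establishing that $E[x]$ is a field: this is where the totally singular conic structure is essential, since Remark~\ref{m} identifies $\mathfrak{m}_R$ with the set of square-zero elements, which is precisely what allows the hypothesis $x\notin E+\mathfrak{m}_R$ to forbid nontrivial zero divisors in $E[x]$.
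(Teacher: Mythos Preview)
Your proof is correct and follows the same strategy as the paper: both identify $E$ and $L$ with the residue fields $R/\mathfrak{m}_R$ and $S/\mathfrak{m}_S$, then use Remark~\ref{m} to obtain $R/\mathfrak{m}_R\simeq(S/\mathfrak{m}_S)\otimes_FK$. The only difference is that where the paper invokes \cite[(3.7 (i))]{mn2} for the isomorphisms $E\simeq R/\mathfrak{m}_R$ and $L\simeq S/\mathfrak{m}_S$, you supply a direct argument for surjectivity of $E\to R/\mathfrak{m}_R$ by showing that any $x\notin E+\mathfrak{m}_R$ would generate a strictly larger subfield $E[x]$; this is essentially a self-contained reproof of that cited result in the present setting.
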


\begin{proof}
Let $\mathfrak{m}$ and $\mathfrak{m}'$ be the respective maximal ideals of $S$ and $R$.
By (\ref{m}), $\mathfrak{m}\otimes_FK$ is the unique maximal ideal of $S\otimes_FK$.
Hence, the isomorphism $R\simeq S\otimes_FK$ induces a $K$-algebra isomorphism
\[R/\mathfrak{m}'\simeq(S\otimes_FK)/(\mathfrak{m}\otimes_FK).\]
Note that there exists a natural $K$-algebra isomorphism $(S\otimes_FK)/(\mathfrak{m}\otimes_FK)\simeq S/\mathfrak{m}\otimes_FK$,
hence
$R/\mathfrak{m}'\simeq S/\mathfrak{m}\otimes_FK$.
By \cite[(3.7 (i))]{mn2}, we have $E\simeq R/\mathfrak{m}'$ as $K$-algebras and $L\simeq S/\mathfrak{m}$ as $F$-algebras, hence $E\simeq L\otimes_FK$.
\end{proof}

\begin{lem}\label{ar}
Let $\mathfrak{b}$ be a bilinear Pfister form over a field $E$.
If $\mathfrak{b}$ is isotropic, then there exist a positive integer $s$ and an anisotropic bilinear Pfister form $\mathfrak{c}$ over $E$ such that $\mathfrak{b}\simeq\lla1\rra^s\otimes\mathfrak{c}$, where $\lla1\rra^s$ is the $s$-fold Pfister form $\lla1,\cdots,1\rra$.
Moreover, the integer $s$ is uniquely determined by the isomorphism class of $\mathfrak{b}$.
\end{lem}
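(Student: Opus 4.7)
The plan is to establish existence by iteratively extracting $\lla 1\rra$-factors from $\mathfrak{b}$, and to establish uniqueness by exhibiting $\dim\mathfrak{c}$ as an intrinsic invariant via the anisotropic part of the associated totally singular quadratic form $\tilde{\mathfrak{b}}(v):=\mathfrak{b}(v,v)$.

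For existence, I would invoke the standard characteristic-2 fact that an isotropic bilinear Pfister form is divisible by $\lla 1\rra$ as a Pfister form: if $\mathfrak{b}\simeq\lla a_1,\dots,a_n\rra$ is isotropic, a chain-equivalence argument using Pfister roundness (namely, $\lla a,b\rra\simeq\lla a,bd\rra$ whenever $d$ is a nonzero value represented by $\lla a\rra$) enables one to replace some slot by $1$, yielding $\mathfrak{b}\simeq\lla 1\rra\otimes\mathfrak{b}'$ for some $(n-1)$-fold Pfister form $\mathfrak{b}'$. If $\mathfrak{b}'$ is anisotropic, I set $s=1$, $\mathfrak{c}=\mathfrak{b}'$; otherwise I recursively apply the same step to $\mathfrak{b}'$. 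Since the fold strictly decreases at each step and the $0$-fold form $\langle 1\rangle$ is anisotropic, the process terminates, producing the desired $s\ge 1$ and anisotropic Pfister $\mathfrak{c}$.

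For uniqueness of $s$, I would pass to $\tilde{\mathfrak{b}}$. In characteristic 2 one has $\tilde{\mathfrak{b}}(v+w)=\tilde{\mathfrak{b}}(v)+\tilde{\mathfrak{b}}(w)$, so $\tilde{\mathfrak{b}}$ is totally singular and additive; its radical $\{v:\tilde{\mathfrak{b}}(v)=0\}$ is an $E$-subspace, and the induced form on the quotient is an anisotropic quasi-Pfister quadratic form intrinsic to $\mathfrak{b}$. Now for any decomposition $\mathfrak{b}\simeq\lla 1\rra^s\otimes\mathfrak{c}$ with $\mathfrak{c}$ anisotropic, identifying the underlying space with $V_\mathfrak{c}^{2^s}$ gives $\tilde{\mathfrak{b}}(v_1,\dots,v_{2^s})=\sum_i\tilde{\mathfrak{c}}(v_i)=\tilde{\mathfrak{c}}\bigl(\sum_i v_i\bigr)$ by additivity of $\tilde{\mathfrak{c}}$, so the radical is $\{(v_i):\sum_i v_i=0\}$, of codimension $\dim\mathfrak{c}$, and the induced form on the quotient is isomorphic to $\tilde{\mathfrak{c}}$. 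Hence $\dim\mathfrak{c}$ equals the dimension of the anisotropic reduction of $\tilde{\mathfrak{b}}$, which is intrinsic to $\mathfrak{b}$, and $s=\log_2(\dim\mathfrak{b}/\dim\mathfrak{c})$ is uniquely determined.

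The main obstacle is the existence step. In characteristic $\ne 2$ it is essentially trivial since an isotropic Pfister form is hyperbolic, but in characteristic 2 only metabolicity is automatic, and producing a genuine $\lla 1\rra$-factor at the level of Pfister forms requires the roundness-based chain manipulation described above. Once that is granted, the uniqueness step reduces to the direct linear-algebra computation of the radical of $\tilde{\mathfrak{b}}$, organized so as to read off $\dim\mathfrak{c}$ intrinsically.
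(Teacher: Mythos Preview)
The paper does not actually prove this lemma; it simply cites \cite[p.~909]{arason}. Your argument is correct and is essentially what that reference does: existence via iterated extraction of a $\lla 1\rra$-factor using roundness of bilinear Pfister forms, and uniqueness via the value set $Q(\mathfrak{b})=\{\mathfrak{b}(v,v):v\in V\}$, which is precisely the image of your $\tilde{\mathfrak{b}}$ and whose $E^2$-dimension equals your $\dim\mathfrak{c}$ (the paper invokes exactly this invariant in the very next lemma, again citing the same page of Arason--Baeza).
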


\begin{proof}
See \cite[p. 909]{arason}.
\end{proof}

\begin{notation}
We denote the integer $s$ in (\ref{ar}) by $\mathfrak{i}(\mathfrak{b})$.
If $\mathfrak{b}$ is anisotropic, we set $\mathfrak{i}(\mathfrak{b})=0$.
Also, if $(A,\sigma)$ is a totally decomposable algebra with orthogonal involution over $E$, we simply denote $\mathfrak{i}(\mathfrak{Pf}(A,\sigma))$ by $\mathfrak{i}(A,\sigma)$.
\end{notation}
Note that \cite[(7.5)]{dolphin} implies that $(A,\sigma)$ is anisotropic if and only if $\mathfrak{i}(A,\sigma)=0$.
Also, in view of \cite[(3.1)]{me}, \cite[(7.5)]{dolphin} and (\ref{disc}), if $\mathfrak{i}(A,\sigma)=s>0$, then \[(A,\sigma)\simeq(M_{2^s}(E),t)\otimes(B,\sigma'),\]
where $(B,\sigma')$ is a totally decomposable algebra with anisotropic orthogonal involution over $E$.
Hence, $\Phi(A,\sigma)\simeq\Phi(M_{2^s}(E),t)\otimes\Phi(B,\sigma')$ by \cite[(5.13)]{mn2}.
According to \cite[(6.8)]{mn2}, $\Phi(B,\sigma')$ is a field.
Also, \cite[(5.7)]{mn2} implies that $x^2\in E^2$ for every $x\in \Phi(M_{2^s}(E),t)$.
Hence, $\Phi(B,\sigma')$ may be identified with a maximal subfield of $\Phi(A,\sigma)$ containing $E$.
It follows from \cite[(3.7 (i))]{mn2} that every maximal subfield of $\Phi(A,\sigma)$ containing $E$ is isomorphic to $\Phi(B,\sigma')$.
In particular, such a maximal subfield has dimension $2^{n-s}$.

\begin{lem}\label{desph}
Let $(A,\sigma)$ be a totally decomposable algebra of degree $2^n$ with orthogonal involution over $K$ and let $r=n-\mathfrak{i}(A,\sigma)$.
If $\Phi(A,\sigma)\simeq S\otimes_FK$ for some totally singular conic $F$-algebra $S$, then there exist $v_1,\cdots,v_n\in\Phi(A,\sigma)$ such that
\begin{itemize}
\item[(1)] $\Phi(A,\sigma)=K[v_1,\cdots,v_n]$;
\item[(2)] $K[v_1,\cdots,v_r]$ is a maximal subfield of $\Phi(A,\sigma)$  with $v_i^2\in F$ for $i=1,\cdots,r$;
\item[(3)] $v_i^2=1$ for $i=r+1,\cdots,n$.
\end{itemize}
\end{lem}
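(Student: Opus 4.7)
The plan is to leverage the hypothesis $\Phi(A,\sigma) \simeq S \otimes_F K$ together with the canonical decomposition of $(A,\sigma)$ dictated by its isotropy index $s = \mathfrak{i}(A,\sigma)$, so that $r = n-s$. By the remarks immediately preceding the lemma, if $s > 0$ then
\[(A,\sigma) \simeq (M_{2^s}(K), t) \otimes (B, \sigma'),\]
with $(B,\sigma')$ totally decomposable and anisotropic of degree $2^r$; correspondingly $\Phi(A,\sigma) \simeq \Phi(M_{2^s}(K), t) \otimes \Phi(B, \sigma')$ by \cite[(5.13)]{mn2}, and $E := \Phi(B,\sigma')$ is a field sitting as a maximal subfield of $\Phi(A,\sigma)$ containing $K$ of $K$-dimension $2^r$.

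Next I would apply Lemma \ref{desphi} to $R = \Phi(A,\sigma)$ with the given $F$-form $S$ and the maximal subfield $E$ above: this produces a maximal subfield $L$ of $S$ containing $F$ with $E \simeq L \otimes_F K$. Since $L$ is itself a totally singular conic $F$-field of dimension $2^r$, one has $L^2 \subseteq F$, so $L/F$ is a purely inseparable extension of exponent at most one. Standard theory (or a short induction on dimension, successively adjoining square roots of elements of $F$) furnishes elements $u_1, \ldots, u_r \in L$ with $u_i^2 \in F$ and $L = F[u_1,\ldots,u_r]$. Letting $v_i \in E$ be the image of $u_i \otimes 1$ gives $v_i^2 = u_i^2 \in F$ and $K[v_1,\ldots,v_r] \simeq L \otimes_F K \simeq E$, which is a maximal subfield of $\Phi(A,\sigma)$; this realizes (2).

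For the remaining generators I would compute $\Phi(M_{2^s}(K), t)$ explicitly. By \cite[(5.13)]{mn2} it factors as an $s$-fold tensor product of copies of $\Phi(M_2(K), t)$, and this single factor equals $K[e]$, where $e$ is the off-diagonal permutation matrix lying in $\alt(M_2(K),t)$; the relation $e^2 = 1$ is immediate and $K[e]$ exhausts the two-dimensional space $K \oplus \alt(M_2(K),t)$. Taking one such element from each tensor factor as $v_{r+1}, \ldots, v_n$ realizes (3), and combining them with $v_1, \ldots, v_r$ under the tensor decomposition yields $\Phi(A,\sigma) = K[v_1,\ldots,v_n]$, which is (1). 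In the edge case $s = 0$, the algebra $(A,\sigma)$ is anisotropic, $r = n$, there is no matrix factor, and (3) is vacuous.

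The main obstacle is the step that produces $u_1,\ldots,u_r$: one needs that a totally singular conic $F$-field of dimension $2^r$ admits a generating set of $r$ elements whose squares lie in $F$. This is the standard statement that a characteristic-two purely inseparable extension of exponent at most one and degree $2^r$ has the form $F(a_1^{1/2},\ldots,a_r^{1/2})$; it is routine but deserves either a careful citation or an explicit inductive argument to make the proof self-contained.
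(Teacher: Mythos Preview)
Your proposal is correct and matches the paper's argument almost exactly: both invoke Lemma~\ref{desphi} to descend a maximal subfield of $\Phi(A,\sigma)$ to $L\otimes_F K$ and then generate $L$ by $r$ elements (your flagged ``obstacle'' is not one, since every element of a totally singular conic $F$-algebra already squares into $F$, so the only issue is the existence of $r$ generators for a degree-$2^r$ exponent-one purely inseparable extension, which is standard). The only divergence is in producing $v_{r+1},\ldots,v_n$: the paper cites \cite[(3.9)]{mn2} to extend the first $r$ generators to a full generating set of $\Phi(A,\sigma)$ with the extra ones nilpotent (then shifts each by~$1$), whereas you compute $\Phi(M_{2^s}(K),t)$ explicitly via the off-diagonal permutation matrix---both routes work equally well.
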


\begin{proof}
Let $L$ and $E$ be maximal subfields of $S$ and $\Phi(A,\sigma)$ respectively, with $F\subseteq L$ and $K\subseteq E$.
By (\ref{desphi}) we have  $E\simeq L\otimes_FK$ as $K$-algebras.
Also, as already observed, we have $\dim_FL=\dim_KE=2^r$.
Write $L=F[v_1,\cdots,v_r]$ for some $v_1,\cdots,v_r\in L$.
Then $E\simeq K[v_1,\cdots,v_r]$.
Since $\Phi(A,\sigma)$ is generated, as a $K$-algebra, by $n$ elements, by \cite[(3.9)]{mn2} there exist $v_{r+1},\cdots,v_{n}\in\Phi(A,\sigma)$ such that $\Phi(A,\sigma)=K[v_1,\cdots,v_n]$ and $v_i^2=0$, $i=r+1,\cdots,n$.
This proves parts $(1)$ and $(2)$.
The third part follows by replacing $v_i$ with $v_i+1$ for $i=r+1,\cdots,n$.
\end{proof}

\section{Applications to the Pfister invariant}\label{sec-an}
The following notation was used in \cite{arason}.
\begin{notation}
For a bilinear form $\mathfrak{b}$ over a field $E$ we use the notation
$Q(\mathfrak{b})=\{\mathfrak{b}(v,v)\mid v\in V\}$, where $V$ is an underlying vector space of $\mathfrak{b}$.
\end{notation}
Note that $Q(\mathfrak{b})$ is an $E^2$-subspace of $E$.
Also, since every sum of squares in $E$ is a square, we have $Q(\lla1\rra^s\otimes\mathfrak{b})=Q(\mathfrak{b})$ for every positive integer $s$.

\begin{lem}\label{aras}
Let $\mathfrak{b}$ and $\mathfrak{b}'$ be two anisotropic bilinear $n$-fold Pfister forms over a field $E$ and let $s$ be a positive integer.
Then $\lla1\rra^s\otimes\mathfrak{b}\simeq\lla1\rra^s\otimes\mathfrak{b}'$ if and only if $Q(\mathfrak{b})=Q(\mathfrak{b}')$.
\end{lem}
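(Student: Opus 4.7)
The plan is to prove the two directions separately. The forward implication is immediate from what is already observed in the paragraph preceding the lemma, while the reverse implication reduces to a classification theorem of Arason for anisotropic bilinear Pfister forms.

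For the forward direction, any isometry $\lla1\rra^s\otimes\mathfrak{b}\simeq\lla1\rra^s\otimes\mathfrak{b}'$ carries the value set of the source bijectively onto the value set of the target, so $Q(\lla1\rra^s\otimes\mathfrak{b})=Q(\lla1\rra^s\otimes\mathfrak{b}')$. Applying the identity $Q(\lla1\rra^s\otimes\mathfrak{c})=Q(\mathfrak{c})$ (stated in the note just above the lemma) to both $\mathfrak{b}$ and $\mathfrak{b}'$ yields $Q(\mathfrak{b})=Q(\mathfrak{b}')$.

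For the reverse direction, my plan is to invoke the classification theorem from \cite{arason} (the same reference already cited for (\ref{ar})): two anisotropic bilinear Pfister forms over a field of characteristic $2$ are isometric if and only if they have the same value set. Under the assumption $Q(\mathfrak{b})=Q(\mathfrak{b}')$, this immediately gives $\mathfrak{b}\simeq\mathfrak{b}'$, whence $\lla1\rra^s\otimes\mathfrak{b}\simeq\lla1\rra^s\otimes\mathfrak{b}'$ by tensoring both sides with $\lla1\rra^s$. Note that the stated hypothesis that the two forms have the same fold number $n$ is actually forced by $Q(\mathfrak{b})=Q(\mathfrak{b}')$, since in the anisotropic case $\dim_{E^2}Q(\mathfrak{b})=2^n$, but including it in the hypothesis avoids that detour.

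The only real obstacle is the precise form in which Arason's classification is available. If a direct citation is not possible, the backup strategy is to diagonalize $\mathfrak{b}\simeq\lla a_1,\cdots,a_n\rra$, use $Q(\mathfrak{b})=Q(\mathfrak{b}')$ together with anisotropy to realize each $a_i$ as a value of $\mathfrak{b}'$, and then exploit the roundness of bilinear Pfister forms (every nonzero represented value is a similarity factor) to assemble an explicit isometry one slot at a time.
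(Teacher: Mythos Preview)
Your argument is correct and aligns with the paper's approach: the paper's proof is simply the citation ``See \cite[p.~909]{arason}'', and your reverse direction invokes precisely the classification result found there, while your forward direction just unpacks the observation already recorded before the lemma. In fact you prove slightly more than stated, namely $\mathfrak{b}\simeq\mathfrak{b}'$ rather than merely $\lla1\rra^s\otimes\mathfrak{b}\simeq\lla1\rra^s\otimes\mathfrak{b}'$, which is harmless.
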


\begin{proof}
See \cite[p. 909]{arason}.
\end{proof}
We continue to assume that $K=F(\eta)$ is a separable quadratic extension of $F$ with $\eta^2+\eta=\delta\in F\setminus\wp(F)$.

\begin{lem}\label{b}
Let $(A,\sigma)$ be a totally decomposable algebra of degree $2^n$ with isotropic orthogonal involution over $K$.
If $\Phi(A,\sigma)\simeq S\otimes_FK$ for some totally singular conic $F$-algebra $S$, then there exists a symmetric bilinear form $\mathfrak{b}$ over $F$ such that $\mathfrak{Pf}(A,\sigma)\simeq\mathfrak{b}_K$.
\end{lem}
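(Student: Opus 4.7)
Since $\sigma$ is isotropic, $s:=\mathfrak{i}(A,\sigma)>0$. Set $r=n-s$, and by Lemma \ref{ar} write $\mathfrak{Pf}(A,\sigma)\simeq\lla1\rra^s\otimes\mathfrak{c}$ with $\mathfrak{c}$ an anisotropic bilinear Pfister form over $K$. Apply Lemma \ref{desph} to obtain $v_1,\dots,v_n\in\Phi(A,\sigma)$ such that $E:=K[v_1,\dots,v_r]$ is a maximal subfield of $\Phi(A,\sigma)$ with $\alpha_i:=v_i^2\in F$ for $i\le r$ (nonzero because $E$ is a field, hence $\alpha_i\in F^\times$) and $v_i^2=1$ for $i>r$. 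My candidate will be
\[
\mathfrak{b}\;=\;\lla\alpha_1,\dots,\alpha_r\rra\otimes\lla1\rra^s,
\]
a bilinear Pfister form over $F$, so that $\mathfrak{b}_K\simeq\lla1\rra^s\otimes\lla\alpha_1,\dots,\alpha_r\rra_K$.

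To match $\mathfrak{Pf}(A,\sigma)$ with $\mathfrak{b}_K$ I would invoke Lemma \ref{aras}: it suffices to prove that $\lla\alpha_1,\dots,\alpha_r\rra_K$ is anisotropic and that $Q(\mathfrak{c})=Q(\lla\alpha_1,\dots,\alpha_r\rra_K)$. By the structure recalled just before Lemma \ref{desph}, the decomposition $(A,\sigma)\simeq(M_{2^s}(K),t)\otimes(B,\sigma')$ with $\sigma'$ anisotropic yields $\mathfrak{c}\simeq\mathfrak{Pf}(B,\sigma')$, and $\Phi(B,\sigma')$ is a maximal subfield of $\Phi(A,\sigma)$ containing $K$; by \cite[(3.7 (i))]{mn2} it is $K$-isomorphic to $E$. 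Under the identification of $\Phi(B,\sigma')$ as the underlying space of $\mathfrak{c}$ with $\mathfrak{c}(v,v)=v^2$, this gives
\[
Q(\mathfrak{c})\;=\;\{v^2\mid v\in E\}.
\]
Since $\dim_K E=2^r$, the $2^r$ monomials $v_I:=\prod_{i\in I}v_i$ form a $K$-basis of $E$, so expanding $v=\sum_I\lambda_Iv_I$ yields $v^2=\sum_I\lambda_I^2\prod_{i\in I}\alpha_i$, which is precisely the value set $Q(\lla\alpha_1,\dots,\alpha_r\rra_K)$. Anisotropy of $\lla\alpha_1,\dots,\alpha_r\rra_K$ over $K$ is immediate from $E$ being a field, since $v^2=0$ forces $v=0$.

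The main obstacle is setting up the identification of the abstract anisotropic form $\mathfrak{c}$ with the explicit Pfister form $\lla\alpha_1,\dots,\alpha_r\rra_K$, which is achieved by recognising $E\simeq\Phi(B,\sigma')$ as a common underlying field on which the squaring map computes the diagonal of both Pfister forms. Once that bridge is in place, Lemma \ref{aras} produces $\lla1\rra^s\otimes\mathfrak{c}\simeq\lla1\rra^s\otimes\lla\alpha_1,\dots,\alpha_r\rra_K$, i.e.\ $\mathfrak{Pf}(A,\sigma)\simeq\mathfrak{b}_K$, as required.
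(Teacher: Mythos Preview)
Your argument is correct and follows essentially the same route as the paper: use Lemma~\ref{desph} to produce generators $v_1,\dots,v_n$ with $v_i^2\in F^\times$, set $\mathfrak{b}=\lla\alpha_1,\dots,\alpha_n\rra$ over $F$, and compare $Q$-sets via Lemma~\ref{aras}. The paper's version is marginally more direct, comparing $Q(\mathfrak{b}_K)$ and $Q(\mathfrak{Pf}(A,\sigma))$ on the common underlying space $\Phi(A,\sigma)$ (both equal $\{x^2\mid x\in\Phi(A,\sigma)\}$) rather than passing through $\Phi(B,\sigma')$; note also that your intermediate claim ``$\mathfrak{c}\simeq\mathfrak{Pf}(B,\sigma')$'' is stronger than what Lemma~\ref{ar} or Lemma~\ref{aras} give you, but harmless, since you only use $Q(\mathfrak{c})=Q(\mathfrak{Pf}(B,\sigma'))$, which does follow.
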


\begin{proof}
Set $s=\mathfrak{i}(A,\sigma)>0$ and $r=n-s$.
By (\ref{desph}), one can write $\Phi(A,\sigma)=K[v_1,\cdots,v_n]$ for some $v_1,\cdots,v_n\in\Phi(A,\sigma)$ such that
$K[v_1,\cdots,v_r]$ is a maximal subfield of $\Phi(A,\sigma)$ with $v_i^2\in F^\times$, $i=1,\cdots,r$, and $v_i^2=1$ for $i=r+1,\cdots,n$.
Set $\alpha_i=v_i^2\in F^\times$, $i=1,\cdots,n$.
Let $\mathfrak{b}$ be the bilinear Pfister form $\lla\alpha_1,\cdots,\alpha_n\rra$ over $F$.
Then $\mathfrak{b}\simeq\lla1\rra^s\otimes\mathfrak{c}$, where $\mathfrak{c}=\lla\alpha_1,\cdots,\alpha_r\rra$.
The algebra $F[v_1,\cdots,v_n]$ may be considered as an underlying vector space of $\mathfrak{b}$ such that $\mathfrak{b}(x,x)=x^2$ for $x\in F[v_1,\cdots,v_n]$.
Hence, $\Phi(A,\sigma)\simeq F[v_1,\cdots,v_n]\otimes_FK$ may be considered as an underlying $K$-vector space of $\mathfrak{b}_K$ such that
\begin{equation}\label{eq1}
\mathfrak{b}_K(x,x)=x^2 \quad{\rm for}\quad x\in\Phi(A,\sigma).
\end{equation}
Since $K[v_1,\cdots,v_r]$ is a field, the form $\mathfrak{c}_K$ is anisotropic.
On the other hand, there exists an anisotropic $r$-fold Pfister form $\mathfrak{c}'$ over $K$ for which $\mathfrak{Pf}(A,\sigma)\simeq\lla1\rra^s\otimes\mathfrak{c}'$.
Since $\mathfrak{Pf}(A,\sigma)(x,x)=x^2$ for every $x\in \Phi(A,\sigma)$, (\ref{eq1}) implies that $Q(\mathfrak{b}_K)=Q(\mathfrak{Pf}(A,\sigma))$, i.e., $Q(\mathfrak{c}_K)=Q(\mathfrak{c}')$.
Using (\ref{aras}) we get
\[\mathfrak{Pf}(A,\sigma)\simeq\lla1\rra^s\otimes\mathfrak{c}'\simeq\lla1\rra^s\otimes\mathfrak{c}_K\simeq\mathfrak{b}_K.\qedhere\]
\end{proof}

\begin{lem}\label{sa}
Let $(A,\sigma)$ be a central simple algebra with involution over a field $E$.
For every $x\in\sym(A,\sigma)$ and $y\in\alt(A,\sigma)$ we have $xyx\in\alt(A,\sigma)$.
\end{lem}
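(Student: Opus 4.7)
The plan is to unwind the definitions: write an arbitrary element $y\in\alt(A,\sigma)$ as $y=a-\sigma(a)$ for some $a\in A$, and then observe that the candidate element $z:=xax$ already satisfies $z-\sigma(z)=xyx$.

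More concretely, first I would fix $x\in\sym(A,\sigma)$ and $y\in\alt(A,\sigma)$, and use the defining presentation $y=a-\sigma(a)$ for some $a\in A$. Second, I would compute $\sigma(xax)$ using that $\sigma$ is an anti-automorphism of order two and that $\sigma(x)=x$: this gives $\sigma(xax)=\sigma(x)\sigma(a)\sigma(x)=x\sigma(a)x$. Third, I would form the difference
\[
xax-\sigma(xax)=xax-x\sigma(a)x=x(a-\sigma(a))x=xyx,
\]
so that $xyx=z-\sigma(z)$ with $z=xax$, which is by definition an element of $\alt(A,\sigma)$.

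There is no real obstacle here; the whole argument is a two-line manipulation, and the only thing to be careful about is that the characteristic-$2$ convention $a-\sigma(a)=a+\sigma(a)$ is harmless and the identity $\sigma(xax)=x\sigma(a)x$ uses precisely the two hypotheses $\sigma^2=\id$ and $\sigma(x)=x$. No appeal to any earlier lemma in the excerpt is needed.
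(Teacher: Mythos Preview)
Your proof is correct and is essentially identical to the paper's own argument: the paper also writes $y=z-\sigma(z)$ and observes that $xyx=xzx-\sigma(xzx)\in\alt(A,\sigma)$. You simply spell out the step $\sigma(xzx)=x\sigma(z)x$ a bit more explicitly.
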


\begin{proof}
Write $y=z-\sigma(z)$ for some $z\in A$.
Then
\[xyx=x(z-\sigma(z))x=xzx-\sigma(xzx)\in\alt(A,\sigma).\qedhere\]
\end{proof}

The next result follows from \cite[(6.1)]{dolphin}.
\begin{lem}\label{direct}
Let $(A,\sigma)$ be a totally decomposable algebra with anisotropic ortho\-gonal involution over a field $E$ and let $x\in A$.
If $\sigma(x)x\in\alt(A,\sigma)$, then $x=0$.
\end{lem}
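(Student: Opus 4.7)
I would proceed by induction on the number $n$ of quaternion factors in a decomposition $(A,\sigma)\simeq\bigotimes_{i=1}^n(Q_i,\sigma_i)$, with the base case $n=1$ handled by direct computation and the inductive step reducing to smaller degree; this mirrors Dolphin's argument \cite[(6.1)]{dolphin}.

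\textbf{Base case.} For $n=1$, let $(Q,\sigma)$ be a quaternion algebra over $E$ with anisotropic orthogonal involution. Pick a nonzero $v\in\alt(Q,\sigma)$; by (\ref{usym}), $v$ extends to a quaternion basis $(1,u,v,w)$ with $\sigma(u)=u$, so that $\sigma$ fixes $1,u,v$ and $\sigma(w)=w+v$. Anisotropy of $\mathfrak{Pf}(Q,\sigma)\simeq\lla v^2\rra$ (equivalent to that of $\sigma$ by \cite[(7.5)]{dolphin}) forces $b:=v^2\notin E^{\times 2}$. Writing $x=\alpha_0+\alpha_1 u+\alpha_2 v+\alpha_3 w$ and expanding $\sigma(x)x$ in the basis, the $w$-coefficient vanishes automatically (consistent with $\sigma(x)x\in\sym(Q,\sigma)$), while the $1$- and $u$-coefficients are totally singular quadratic expressions in $\alpha_0,\ldots,\alpha_3$. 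The condition $\sigma(x)x\in Ev=\alt(Q,\sigma)$ forces both to vanish; the $u$-equation simplifies to $\alpha_1^2=b\alpha_3^2$, forcing $\alpha_1=\alpha_3=0$ since $b$ is not a square, and then the $1$-equation becomes $\alpha_0^2=b\alpha_2^2$, forcing $\alpha_0=\alpha_2=0$. Hence $x=0$.

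\textbf{Inductive step.} Decompose $(A,\sigma)\simeq(A_1,\sigma_1)\otimes(Q,\tau)$ with $(A_1,\sigma_1)$ anisotropic, orthogonal, totally decomposable of smaller degree. In a $Q$-basis $(1,u,v,w)$ as above, write $x=x_0\otimes 1+x_1\otimes u+x_2\otimes v+x_3\otimes w$ with $x_i\in A_1$, compute $\sigma(x)x=(\sigma_1\otimes\tau)(x)\cdot x$, and group terms by $Q$-basis element. The condition $\sigma(x)x\in\alt(A,\sigma)$ should, after a careful reduction modulo $\alt(A,\sigma)$, unpack into finitely many constraints of the form $\sigma_1(y)y\in\alt(A_1,\sigma_1)$ for various $y\in A_1$ built linearly from the $x_i$; the inductive hypothesis forces each such $y$ to vanish, and unraveling yields $x_i=0$.

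\textbf{Main obstacle.} The inductive step is the delicate part: in characteristic $2$, $\alt(A,\sigma)$ has a nontrivial tensor structure with ``cross terms'' from $(\sigma_1\otimes\tau)(x\otimes q)=\sigma_1(x)\otimes\tau(q)$ that do not decompose cleanly in terms of $\alt(A_1,\sigma_1)$, $\alt(Q,\tau)$, $\sym(A_1,\sigma_1)$, $\sym(Q,\tau)$, so the reduction modulo $\alt(A,\sigma)$ requires careful bookkeeping. The cleanest way around this is Dolphin's approach: construct a quadratic form $\mathfrak{q}_\sigma$ on $A$ satisfying $\mathfrak{q}_\sigma(x)=0$ iff $\sigma(x)x\in\alt(A,\sigma)$, and show that for totally decomposable orthogonal $\sigma$ this is a quadratic Pfister form whose anisotropy is equivalent to that of $\mathfrak{Pf}(A,\sigma)$, hence to that of $\sigma$.
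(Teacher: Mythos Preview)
The paper's own proof is nothing more than the citation ``follows from \cite[(6.1)]{dolphin},'' which is precisely the reference you invoke; so your proposal and the paper agree. Your base-case computation is correct (the $u$-coefficient of $\sigma(x)x$ is indeed $\alpha_1^2+b\alpha_3^2$ and the $1$-coefficient then reduces to $\alpha_0^2+b\alpha_2^2$), and your honest assessment of the inductive step is accurate: the direct coefficient-matching does not unpack cleanly, and Dolphin's actual argument is the form-theoretic one via $\mathfrak{q}_\sigma$ that you describe in your final paragraph.
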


\begin{cor}\label{zero}
Let $(A,\sigma)$ be a totally decomposable algebra with anisotropic ortho\-gonal involution over a field $E$ and let $x\in\sym(A,\sigma)$.
If $x^2+\alpha\in\alt(A,\sigma)$ for some $\alpha\in E$, then $x^2=\alpha$.
\end{cor}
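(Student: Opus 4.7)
The plan is to set $u = x^2+\alpha$, which by hypothesis lies in $\alt(A,\sigma)$, and to deduce $u=0$ by applying Lemma \ref{direct} to $u$ itself. Since in characteristic two every element $z-\sigma(z)$ is fixed by $\sigma$, we have $\alt(A,\sigma)\subseteq\sym(A,\sigma)$, so $\sigma(u)=u$ and hence $\sigma(u)u=u^2$. It therefore suffices to prove that $u^2\in\alt(A,\sigma)$; Lemma \ref{direct} will then give $u=0$, which is exactly $x^2=\alpha$.

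To verify that $u^2\in\alt(A,\sigma)$, I will exploit the fact that $u$ is a polynomial in $x$ with coefficients in the centre $E$, so $x$ and $u$ commute. Expanding the product, one has
\[u^2 = (x^2+\alpha)u = x^2 u + \alpha u.\]
The summand $\alpha u$ lies in $\alt(A,\sigma)$ because $\alpha\in E$ is central and $\alt(A,\sigma)$ is stable under scalar multiplication by $E$. For the summand $x^2 u$, the commutativity $xu=ux$ gives $x^2u = xux$, and Lemma \ref{sa}, applied to $x\in\sym(A,\sigma)$ and $u\in\alt(A,\sigma)$, yields $xux\in\alt(A,\sigma)$. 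Hence $u^2\in\alt(A,\sigma)$, completing the proof.

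The main ingredient is the rewriting $u^2 = xux+\alpha u$, which brings the conjugation-closure provided by Lemma \ref{sa} into play; without this observation one would be tempted to extend scalars to $E(\sqrt{\alpha})$ to write $u=(x+\sqrt\alpha)^2$ and apply Lemma \ref{direct} there, but that route requires checking that anisotropy of $\sigma$ is preserved under the (possibly inseparable) quadratic extension, which is a substantially more delicate issue. The argument sketched above avoids this altogether and is entirely internal to $(A,\sigma)$.
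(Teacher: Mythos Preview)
Your proof is correct and is essentially identical to the paper's argument: setting $u=x^2+\alpha$, both proofs decompose $\sigma(u)u=u^2=xux+\alpha u$, invoke Lemma~\ref{sa} for the first summand, and then apply Lemma~\ref{direct}. The only difference is cosmetic---you introduce the abbreviation $u$ and make the commutativity $xu=ux$ explicit, whereas the paper writes out $x^4+\alpha^2=(x^4+x^2\alpha)+\alpha(x^2+\alpha)$ directly.
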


\begin{proof}
By (\ref{sa}) we have $x^4+x^2\alpha=x(x^2+\alpha)x\in\alt(A,\sigma)$.
Hence
\[\sigma(x^2+\alpha)\cdot(x^2+\alpha)=x^4+\alpha^2=x^4+x^2\alpha+\alpha(x^2+\alpha)\in\alt(A,\sigma),\]
which implies that $x^2=\alpha$ by (\ref{direct}).
\end{proof}

\begin{prop}\label{ani}
Let $(A,\sigma)$ be a totally decomposable algebra with orthogonal involution over $K$.
If $(A,\sigma)$ has a descent to $F$, then there exists a symmetric bilinear form $\mathfrak{b}$ over $F$ such that $\mathfrak{Pf}(A,\sigma)\simeq\mathfrak{b}_K$.
\end{prop}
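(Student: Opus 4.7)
The plan is to split into two cases based on whether $(A,\sigma)$ is isotropic or anisotropic, using Proposition \ref{an} as the common starting point. Since $(A,\sigma)$ has a descent to $F$, that proposition produces a totally singular conic $F$-algebra $S$ with $\Phi(A,\sigma) \simeq S \otimes_F K$, and this structural isomorphism is the input to both cases.

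In the isotropic case the argument is immediate: Lemma \ref{b} takes the structural isomorphism $\Phi(A,\sigma) \simeq S \otimes_F K$ and produces a symmetric bilinear form $\mathfrak{b}$ over $F$ satisfying $\mathfrak{Pf}(A,\sigma) \simeq \mathfrak{b}_K$.

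For the anisotropic case, I would invoke Lemma \ref{desph} next. Since $(A,\sigma)$ is anisotropic, $\mathfrak{i}(A,\sigma) = 0$, so Lemma \ref{desph} with $r = n$ produces generators $v_1, \ldots, v_n$ of $\Phi(A,\sigma) = K[v_1, \ldots, v_n]$ with $\alpha_i := v_i^2 \in F$; these squares are nonzero because $\Phi(A,\sigma)$ is a field in this case. The natural candidate is then $\mathfrak{b} = \lla \alpha_1, \ldots, \alpha_n \rra$ over $F$, so that $\mathfrak{b}_K \simeq \lla \alpha_1, \ldots, \alpha_n \rra_K$. Since $K[v_1, \ldots, v_n]$ is a field of $K$-dimension $2^n$, the $\alpha_i$ are linearly independent in $K^\times / K^{\times 2}$ and hence also in $F^\times / F^{\times 2}$; so $\mathfrak{b}_K$ is an anisotropic $n$-fold Pfister form, as is $\mathfrak{Pf}(A,\sigma)$. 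The identity $\mathfrak{Pf}(A,\sigma)(x,x) = x^2$ on $\Phi(A,\sigma)$, combined with a direct computation of squares in $K[v_1, \ldots, v_n]$ (yielding the $K^2$-span of the products $\prod_{i \in I} \alpha_i$), shows that both forms have the same $Q$-set.

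The main obstacle is converting this equality of $Q$-sets into the desired isometry, since Lemma \ref{aras} is stated only for $s \geq 1$ and cannot be applied directly when both forms are anisotropic. I would handle this either by appealing to the $s = 0$ case of Arason's theorem (two anisotropic bilinear Pfister forms in characteristic $2$ with the same $Q$-set are isometric) directly from \cite{arason}, or as a backup by using Corollary \ref{zero} and the anisotropy of $\sigma$ to arrange the generators $v_i$ to lie in $\alt(A,\sigma)$, from which the construction of $\mathfrak{Pf}$ via a compatible quaternion decomposition (along the lines of \cite[(5.6)]{mn2} or the technique used in the proof of Lemma \ref{trace}) would identify $\mathfrak{Pf}(A,\sigma)$ with $\lla \alpha_1, \ldots, \alpha_n \rra_K$ on the nose.
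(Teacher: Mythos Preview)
Your proposal is correct, and the isotropic case is handled exactly as in the paper. In the anisotropic case, however, your route diverges substantially from the paper's. You reuse Proposition~\ref{an} and Lemma~\ref{desph} (with $r=n$) to produce generators $v_1,\ldots,v_n$ of $\Phi(A,\sigma)$ with $\alpha_i=v_i^2\in F^\times$, and then compare the anisotropic Pfister forms $\mathfrak{Pf}(A,\sigma)$ and $\lla\alpha_1,\ldots,\alpha_n\rra_K$ via their $Q$-sets; the only external input beyond what the paper already quotes is the $s=0$ case of the Arason--Baeza criterion (anisotropic bilinear Pfister forms with equal $Q$-sets are isometric), which is indeed on the same page of \cite{arason} as Lemma~\ref{aras}. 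The paper, by contrast, does not pass through Proposition~\ref{an} or Lemma~\ref{desph} at all in the anisotropic case: it works directly with a descent $(B,\sigma')$ and the isomorphism $g\colon(A,\sigma)\iso(B,\sigma')\otimes_F(K,\id)$, writes each generator $v_i\in\alt(A,\sigma)$ as $g(v_i)=u_i\otimes1+w_i\otimes\eta$ with $u_i,w_i\in\alt(B,\sigma')$, and uses Corollary~\ref{zero} repeatedly to force $u_i^2,w_i^2\in F$ and all pairwise commutativity; this builds an explicit $F$-subalgebra $S=F[u_1,\ldots,u_n,w_1,\ldots,w_n]\subseteq B$ with $\Phi(A,\sigma)=g^{-1}(S\otimes_FK)$, and $\mathfrak{b}$ is then \emph{defined} on $S$ by pulling back $\mathfrak{Pf}(A,\sigma)$ through $g^{-1}$. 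Your approach is more economical and yields a Pfister $\mathfrak{b}$ for free, but leans on the external Arason--Baeza statement; the paper's approach is self-contained and constructive, and is where Lemma~\ref{sa}, Lemma~\ref{direct}, and Corollary~\ref{zero} actually earn their keep. Your backup suggestion (adjusting the $v_i$ into $\alt(A,\sigma)$ and invoking \cite[(5.6)]{mn2}) is less convincing as stated: shifting $v_i$ by its $K$-component may move $v_i^2$ out of $F$, and \cite[(5.6)]{mn2} and \cite[(3.1)]{me} run in the direction ``given a Pfister decomposition, find compatible generators,'' which is the reverse of what you need.
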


\begin{proof}
If $\sigma$ is isotropic, the result follows from (\ref{an}) and (\ref{b}).
Suppose that $\sigma$ is anisotropic.
Let $(B,\sigma')$ be a descent of $(A,\sigma)$ to $F$ and consider an isomorphism of $K$-algebras with involution
\[g:(A,\sigma)\iso(B,\sigma')\otimes_F(K,\id).\]
Write $\Phi(A,\sigma)=K[v_1,\cdots,v_n]$ for some $v_1,\cdots,v_n\in\alt(A,\sigma)$.
As $g(\alt(A,\sigma))\break=\alt(B,\sigma')\otimes_F K$,
one  can write $g(v_i)=u_i\otimes1+w_i\otimes\eta$ for some $u_i,w_i\in\alt(B,\sigma')$, $i=1,\cdots,n$.
We first show that $u_i^2,w_i^2\in F$ for every $i$.
We have
\begin{align}\label{eq4}
g(v_i)^2=(u_i\otimes1+w_i\otimes\eta)^2=(u_i^2+\delta w_i^2)\otimes1+(u_iw_i+w_iu_i+w_i^2)\otimes\eta.
\end{align}
Set $\alpha_i=u_iw_i+w_iu_i+w_i^2$. Then
\[w_i^2+\alpha_i=u_iw_i+w_iu_i=u_iw_i-\sigma'(u_iw_i)\in\alt(B,\sigma').\]
Since $g(v_i)^2\in K$ we have $\alpha_i\in F$.
As $\sigma$ is anisotropic, $\sigma'$ is also anisotropic.
Hence (\ref{zero}) implies that
\begin{equation}\label{eq5}
w_i^2=\alpha_i\in F,
\end{equation}
 i.e., $u_iw_i=w_iu_i$.
Since $g(v_i)^2\in K$, (\ref{eq4}) and (\ref{eq5}) imply that $u_i^2\in F$ for every $i$.

We now show that $w_iw_j=w_jw_i$ for $i,j=1,\cdots,n$.
As $(w_i+w_j)^2=\alpha_i+\alpha_j+w_iw_j+w_jw_i$, we have $(w_i+w_j)^2+\alpha_i+\alpha_j\in\alt(B,\sigma')$.
Again, (\ref{zero}) implies that $(w_i+w_j)^2=\alpha_i+\alpha_j$, i.e., $w_iw_j=w_jw_i$.
A similar argument shows that $w_iu_j=u_jw_i$ and $u_iu_j=u_ju_i$ for $i,j=1,\cdots,n$.

Set $S=F[u_1,\cdots,u_n,w_1,\cdots,w_{n}]$.
Then $S$ is a totally singular conic $F$-subalgebra of $B$.
Since $\Phi(A,\sigma)=K[v_1,\cdots,v_n]$ and $v_i=g^{-1}(u_i\otimes1+w_i\otimes\eta)$ for every $i$, we have $\Phi(A,\sigma)\subseteq g^{-1}(S\otimes_FK)$.
On the other hand $\Phi(A,\sigma)$ is maximal commutative and $g^{-1}(S\otimes_FK)\subseteq A$ is commutative.
Hence $\Phi(A,\sigma)=g^{-1}(S\otimes_FK)$.
We claim that \[\mathfrak{Pf}(A,\sigma)(g^{-1}(s_1\otimes1),g^{-1}(s_2\otimes1))\in F \quad {\rm for}\quad s_1,s_2\in S.\]
Write $\mathfrak{Pf}(A,\sigma)(g^{-1}(s_1\otimes1),g^{-1}(s_2\otimes1))=a+b\eta$ for some $a,b\in F$.
Then
\begin{equation*}
g^{-1}(s_1s_1\otimes1)+a+b\eta=g^{-1}(s_1\otimes1)g^{-1}(s_2\otimes1)+a+b\eta\in \alt(A,\sigma).
\end{equation*}
The equality $g(\alt(A,\sigma))=\alt(B,\sigma')\otimes_FK$ then implies that
$(s_1s_2+a)\otimes1+b\otimes\eta\in\alt(B,\sigma')\otimes_FK$, so
$b\in\alt(B,\sigma')$.
Since $\sigma'$ is orthogonal, we have $1\notin\alt(B,\sigma')$, hence $b=0$.
This proves the claim.
Finally, define the bilinear form $\mathfrak{b}:S\times S\rightarrow F$ via \[\mathfrak{b}(s_1,s_2)=\mathfrak{Pf}(A,\sigma)(g^{-1}(s_1\otimes1),g^{-1}(s_2\otimes1)).\]
Then the restriction of $g^{-1}$ to $S\otimes_FK$ defines an isometry $(S\otimes_FK,\mathfrak{b}_K)\iso(\Phi(A,\sigma),\mathfrak{Pf}(A,\sigma))$.
\end{proof}

\section{Separable descent in general case}\label{sec-sep}
We continue to assume that $K=F(\eta)$, where $\eta^2+\eta=\delta\in F\setminus\wp(F)$.

\begin{lem}\label{norm}
Let $E=K(\sqrt\alpha)$ and $L=F(\sqrt\alpha)$ for some $\alpha\in F\setminus K^2$.
Let $A$ be a central simple algebra over $K$.
If $\co_{K/F}(A)$ splits, then $\co_{E/L}(A\otimes_KE)$ is also split.
\end{lem}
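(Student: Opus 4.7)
The plan is to show directly that $\co_{E/L}(A\otimes_K E)$ is isomorphic, as an $L$-algebra, to $\co_{K/F}(A)\otimes_F L$; once that identification is in place, splitting transfers for free.

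First, since $K/F$ is separable quadratic while $L/F$ is purely inseparable (because $\alpha\notin K^2$ forces $\alpha\notin F^2$), the fields $K$ and $L$ are linearly disjoint over $F$ and $E=K(\sqrt\alpha)=L(\eta)\simeq L\otimes_F K$ as $F$-algebras. In particular, the nontrivial $L$-automorphism $\iota'$ of $E$ restricts to $\iota$ on $K$ and to the identity on $L$.

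Next, I would use the identification $A\otimes_K E\simeq A\otimes_F L$ coming from $E\simeq K\otimes_F L$, and compute the conjugate algebra as ${^{\iota'}(A\otimes_K E)}\simeq{^{\iota}A}\otimes_F L$. Combining these identifications should yield
\[
{^{\iota'}(A\otimes_K E)}\otimes_E (A\otimes_K E)\;\simeq\;\bigl({^{\iota}A}\otimes_K A\bigr)\otimes_F L,
\]
and under this isomorphism the switch map $s'$ governing $\co_{E/L}$ corresponds to $s\otimes\id_L$, where $s$ is the switch map governing $\co_{K/F}$. Since the formation of $s$-fixed points commutes with the flat base change $-\otimes_F L$, passing to invariants then gives
\[
\co_{E/L}(A\otimes_K E)\;\simeq\;\co_{K/F}(A)\otimes_F L
\]
as $L$-algebras. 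Because $\co_{K/F}(A)$ is split by hypothesis, $\co_{K/F}(A)\otimes_F L$ is split, and therefore so is $\co_{E/L}(A\otimes_K E)$.

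The main obstacle is bookkeeping: correctly tracking the action of $\iota'$ under the identification $E\simeq K\otimes_F L$, and verifying that the switch map really does decompose as $s\otimes\id_L$. This needs care because the two tensor factors carry slightly asymmetric $E$-actions (the $K$-action on the left factor is twisted by $\iota$, while the $L$-action is the same on both), so the intertwining with $s'$ has to be checked on simple tensors of the form ${^{\iota}x}\otimes y\otimes\lambda$ with $x,y\in A$ and $\lambda\in L$. Once this compatibility is established, no further nontrivial input is required.
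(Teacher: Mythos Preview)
Your argument is correct and establishes something slightly stronger than the lemma actually needs: the base-change identity $\co_{E/L}(A\otimes_K E)\simeq\co_{K/F}(A)\otimes_F L$. The identifications you set up are all valid, the key point being that $\iota'$ restricts to $\iota$ on $K$ and to the identity on $L$, so under $E\simeq K\otimes_F L$ the conjugate and switch structures decouple as you claim. Your caution about the fixed-point step is well placed in characteristic~$2$ (where $s$ need not be semisimple), but all you use is that $\ker(s-\id)$ commutes with the flat base change $-\otimes_F L$, which is standard.

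The paper takes a different and somewhat shorter route: it invokes the criterion from \cite[(3.1 (2))]{knus} that $\co_{K/F}(A)$ splits if and only if $A$ admits an involution of the second kind over $K/F$. Given such an involution $\sigma$ on $A$, one simply observes that $\sigma\otimes\iota'$ is an involution of the second kind on $A\otimes_K E$ over $E/L$, and the same criterion then gives the splitting of $\co_{E/L}(A\otimes_K E)$. This avoids manipulating the corestriction construction directly and leans entirely on the involution--corestriction dictionary. Your approach, by contrast, is more self-contained and yields the explicit isomorphism $\co_{E/L}(A\otimes_K E)\simeq\co_{K/F}(A)\otimes_F L$, which is a cleaner structural statement, at the cost of the tensor bookkeeping you flagged.
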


\begin{proof}
If $\co_{K/F}(A)$ splits, then by \cite[(3.1 (2))]{knus} there exists an involution $\sigma$ of the second kind on $A$ with $\sigma|_F=\id$.
Note that $E=L(\eta)$ is a separable quadratic extension of $L$.
Let $\iota$ be the nontrivial automorphism of $E/L$.
Then the map $\sigma\otimes\iota$ is an involution of the second kind on $A\otimes_KE$ which leaves $L$ elementwise invariant.
Hence $\co_{E/L}(A\otimes_KE)$ splits by \cite[(3.1 (2))]{knus}.
\end{proof}

\begin{lem}\label{splits}
Let $(A,\sigma)$ be a central simple algebra with involution of the first kind over $K$.
If $(A,\sigma)$ has a descent to $F$, then $\co_{K/F}(A)$ splits.
\end{lem}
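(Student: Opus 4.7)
My plan is to construct, from the assumed descent, an involution of the second kind on $A$ whose restriction to $F$ is the identity, and then appeal to \cite[(3.1 (2))]{knus}---the same criterion already invoked in the proof of Lemma \ref{norm}---to conclude that $\co_{K/F}(A)$ splits.

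First, I would fix a descent $(B,\sigma')$ of $(A,\sigma)$ to $F$, giving an isomorphism $(A,\sigma)\simeq_K(B,\sigma')\otimes_F(K,\id)$. Since the tensor product involution $\sigma'\otimes\id$ is well-defined only when $\sigma'$ is $F$-linear, $\sigma'$ must fix $F$ pointwise; because $B$ is central simple over $F$, this forces $\sigma'$ to be of the first kind on $B$. Letting $\iota$ denote the nontrivial $F$-automorphism of $K$, I would then set $\tau=\sigma'\otimes\iota$ on $B\otimes_FK$. Since $\sigma'$ and $\iota$ are each of order two, $\tau$ is an additive anti-automorphism satisfying $\tau^2=\id$. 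Its restriction to the center $K$ of $B\otimes_FK$ is $\iota$, which has fixed field $F$, so $\tau$ is an involution of the second kind whose restriction to $F$ is the identity.

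Transporting $\tau$ through the isomorphism furnished by the descent produces an involution of the second kind on $A$ that fixes $F$ pointwise, and \cite[(3.1 (2))]{knus} then yields that $\co_{K/F}(A)$ splits. I do not anticipate any real obstacle here; the only step requiring attention is the observation that $\sigma'$ is automatically of the first kind on $B$, which is precisely what ensures that $\tau$ is of the second kind on $B\otimes_FK$, and from there the argument is immediate.
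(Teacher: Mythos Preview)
Your argument is correct. Constructing $\tau=\sigma'\otimes\iota$ gives an involution of the second kind on $B\otimes_FK\simeq A$ whose fixed subfield of the center is $F$, and \cite[(3.1 (2))]{knus} then yields the splitting of $\co_{K/F}(A)$ exactly as in the proof of Lemma~\ref{norm}.

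The paper's own proof takes a slightly different route: it invokes \cite[(3.13 (5))]{knus}, which for a quadratic extension identifies $\co_{K/F}(B\otimes_FK)$ with $B\otimes_FB$ up to Brauer equivalence, and then \cite[(3.1 (1))]{knus}, which says that the existence of a first-kind involution on $B$ forces $B\otimes_FB$ to split. So the paper argues via the exponent-two condition on $B$ together with a corestriction formula, while you argue by directly exhibiting a second-kind involution. Both proofs are one-liners citing standard facts; yours has the minor advantage of reusing the exact mechanism from Lemma~\ref{norm}, whereas the paper's has the advantage of not requiring one to check that $\sigma'\otimes\iota$ is a well-defined involution.
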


\begin{proof}
The result follows from \cite[(3.13 (5))]{knus} and \cite[(3.1 (1))]{knus}.
\end{proof}

\begin{thm}\label{main}
For a totally decomposable algebra with orthogonal involution $(A,\sigma)$ over $K$ the following conditions are equivalent.
\begin{itemize}
  \item [(1)] $(A,\sigma)$ has a descent to $F$.
  \item [(2)] $(A,\sigma)$ has a totally decomposable descent to $F$.
  \item [(3)] $\co_{K/F}(A)$ splits and $\mathfrak{Pf}(A,\sigma)\simeq\mathfrak{b}_K$ for some symmetric bilinear form $\mathfrak{b}$ over~$F$.
\end{itemize}
Furthermore, if $\sigma$ is isotropic, the above conditions are equivalent to:
\begin{itemize}
  \item [(4)] $\co_{K/F}(A)$ splits and $\Phi(A,\sigma)\simeq S\otimes_FK$ for some totally singular conic $F$-algebra $S$.
\end{itemize}
\end{thm}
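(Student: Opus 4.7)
The implication $(2)\Rightarrow(1)$ is immediate, and $(1)\Rightarrow(3)$ combines Lemma~\ref{splits} with Proposition~\ref{ani}. For $(2)\Rightarrow(3)$, take $\mathfrak{b}=\mathfrak{Pf}(C,\tau)$ for any totally decomposable descent $(C,\tau)$ and invoke Lemma~\ref{splits}. In the isotropic case, $(1)\Rightarrow(4)$ follows from Lemma~\ref{splits} and Proposition~\ref{an}, while $(4)\Rightarrow(3)$ is Lemma~\ref{b}. Hence the entire theorem reduces to proving $(3)\Rightarrow(2)$.

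My approach to $(3)\Rightarrow(2)$ is induction on $n$, where $\deg A=2^n$. The base case $n=1$ is essentially Proposition~\ref{quaternion}: condition~(3) supplies both a $v\in\alt(A,\sigma)$ with $v^2\in F^\times$, drawn from an $F$-Pfister representation of $\mathfrak{b}$, and the splitting of $\co_{K/F}(A)$. For the inductive step, write $\mathfrak{b}\simeq\lla\beta_1,\dots,\beta_n\rra$ with $\beta_i\in F^\times$, so $\mathfrak{Pf}(A,\sigma)\simeq\lla\beta_1,\dots,\beta_n\rra_K$, and use \cite[(3.1)]{me} to fix a decomposition $(A,\sigma)\simeq\bigotimes_{i=1}^n(Q_i,\sigma_i)$ with $\disc\sigma_i=\beta_iK^{\times2}$. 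Each $Q_i$ then contains $v_i\in\alt(Q_i,\sigma_i)$ with $v_i^2=\beta_i\in F^\times$, so half of the hypothesis of Proposition~\ref{quaternion} is met for every factor. If one can additionally arrange that $\co_{K/F}(Q_1)$ splits, Proposition~\ref{quaternion} yields a totally decomposable descent $(Q_{1,0},\sigma_{1,0})$ of $(Q_1,\sigma_1)$, and the complementary factor $(A',\sigma')=\bigotimes_{i\ge 2}(Q_i,\sigma_i)$ automatically satisfies~(3), since $\mathfrak{Pf}(A',\sigma')\simeq\lla\beta_2,\dots,\beta_n\rra_K$ is extended from $F$ and $\co_{K/F}[A']=\co_{K/F}[A]-\co_{K/F}[Q_1]=0$. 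The induction hypothesis then provides $(A'_0,\sigma'_0)$, and $(Q_{1,0},\sigma_{1,0})\otimes_F(A'_0,\sigma'_0)$ is the desired totally decomposable descent of $(A,\sigma)$.

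The main obstacle is thus engineering a decomposition in which some individual $\co_{K/F}(Q_i)$ splits. By Lemma~\ref{knus}, writing $Q_i\simeq[a_i,\beta_i)_K$ yields $\co_{K/F}(Q_i)\sim[T_{K/F}(a_i),\beta_i)_F$; by Corollary~\ref{cor}, $a_i$ may be replaced by any $a'_i$ with $a_i+a'_i\in\wp(K)+K^{\times2}\beta_i$ without altering the isomorphism class of $Q_i$. Combining such modifications with Pfister chain-equivalences that reshuffle the $\beta_i$, and using the global constraint $\sum_i[T_{K/F}(a_i),\beta_i)_F=0$ in the Brauer group, should force some $\co_{K/F}(Q_j)$ to split. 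A conceptually cleaner alternative that I would also pursue in parallel is to reduce the anisotropic sub-case to the isotropic one: passing to $L=F(\sqrt{\beta_1})$ and thence to $KL/L$ renders the involution isotropic and preserves the corestriction hypothesis by Lemma~\ref{norm}, so the already-verified isotropic instance of $(3)\Rightarrow(2)$ applies over $L$, after which one descends the resulting $L$-descent back to $F$ via a Galois-type argument.
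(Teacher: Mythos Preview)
Your reductions are correct, and the setup for $(3)\Rightarrow(2)$ matches the paper's up to the point you identify as the obstacle: after replacing $\mathfrak{b}$ by a Pfister form (via \cite[(3.3)]{dolphin} and \cite[(2.4)]{dolphin}) you have $\mathfrak{Pf}(A,\sigma)\simeq\lla\beta_1,\dots,\beta_n\rra_K$ with $\beta_i\in F^\times$, and by \cite[(3.1)]{me} a compatible decomposition with $v_i\in\alt(Q_i,\sigma_i)$, $v_i^2=\beta_i$. The gap is exactly where you say it is, and neither of your two suggestions closes it.

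Approach~(a) is only a hope. The relation $\sum_i[T_{K/F}(a_i),\beta_i)_F=0$ in $\mathrm{Br}(F)$ does not force any individual summand to vanish, and chain-equivalences among the $\beta_i$ do not change this: the classes $[T_{K/F}(a_i),\beta_i)_F$ can each be nontrivial while summing to zero. No mechanism is offered that would actually produce a factor with split corestriction.

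Approach~(b) is circular. You invoke ``the already-verified isotropic instance of $(3)\Rightarrow(2)$'', but no instance of $(3)\Rightarrow(2)$ has been verified; only $(4)\Rightarrow(3)$ was handled in the isotropic case. Moreover, the isotropic case of $(3)\Rightarrow(2)$ is not easier: peeling off split factors $(M_2(K),t)$ reduces the isotropic case to the anisotropic one, not conversely. Finally, the ``Galois-type argument'' for the step $L\to F$ is unspecified; since $L/F$ is purely inseparable there is no Galois group, and a totally decomposable $L$-descent of $A_E$ has no reason to descend further to $F$ without extra control on its quaternion factors.

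The paper resolves the obstacle by changing the inductive scheme. One strengthens the induction hypothesis to demand $v_i\in\alt(M_i,\tau_i)$ for the descended factors, picks some $\beta_n\notin F^{\times2}$, and replaces $A$ by the centralizer $B=C_A(v_n)$, a totally decomposable algebra of degree $2^{n-1}$ over $E=K(\sqrt{\beta_n})$ with $\mathfrak{Pf}(B,\sigma|_B)\simeq\lla\beta_1,\dots,\beta_{n-1}\rra_E$. Lemma~\ref{norm} gives $\co_{E/L}(B)$ split for $L=F(\sqrt{\beta_n})$, so the strengthened hypothesis applies to $B$ over the separable extension $E/L$ and yields quaternion $L$-algebras $(Q'_i,\sigma'_i)$ with $v_i\in\alt(Q'_i,\sigma'_i)$. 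The inseparable step $L\to F$ is then carried out quaternion-by-quaternion via Lemma~\ref{new}, which is precisely where tracking the $v_i$ pays off. The last factor is recovered inside $A$ as $Q''_n=C_A\bigl(\bigotimes_{i<n}M_i\otimes_FK\bigr)$; one checks $v_n\in\alt(Q''_n,\sigma''_n)$ and that $\co_{K/F}(Q''_n)$ splits (all other factors now descend, and $\co_{K/F}(A)$ splits), so Proposition~\ref{quaternion} finishes the induction. In short, the paper never arranges a single $\co_{K/F}(Q_i)$ to split a priori; it sidesteps the issue by dropping to the centralizer and lower degree.
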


\begin{proof}
The implication $(2)\Rightarrow(1)$ is evident and $(1)\Rightarrow(3)$ follows from (\ref{splits}) and (\ref{ani}).

$(3)\Rightarrow(2)$:
If $\mathfrak{Pf}(A,\sigma)\simeq\mathfrak{b}_K$ for some symmetric bilinear form $\mathfrak{b}$ over $F$, then by \cite[(3.3)]{dolphin}, $\mathfrak{b}$ is similar to a Pfister form $\mathfrak{c}$ over $F$.
Hence $\mathfrak{c}_K$ is similar to $\mathfrak{Pf}(A,\sigma)$.
Since $\mathfrak{c}_K$ and $\mathfrak{Pf}(A,\sigma)$ are both Pfister forms, we get $\mathfrak{c}_K\simeq\mathfrak{Pf}(A,\sigma)$ by \cite[(2.4)]{dolphin}.
Write $\mathfrak{c}=\lla\alpha_1,\cdots,\alpha_n\rra$ for some $\alpha_1,\cdots,\alpha_n\in F^\times$.
Then $\mathfrak{Pf}(A,\sigma)\simeq\lla\alpha_1,\cdots,\alpha_n\rra_K$.

By \cite[(3.1)]{me} there exists a decomposition $(A,\sigma)\simeq\bigotimes_{i=1}^n(Q_i,\sigma_i)$ into quaternion algebras with orthogonal involution over $K$ such that $\disc\sigma_i=\alpha_iK^{\times2}\in\penalty 0 K^\times/K^{\times2}$, $i=1,\cdots,n$.
If $\alpha_i\in F^{\times2}$ for every $i$, then (\ref{disc}) shows that
\[\textstyle(A,\sigma)\simeq_K\bigotimes_{i=1}^n(M_2(K),t)\simeq_K\bigotimes_{i=1}^n(M_2(F),t)\otimes_F(K,\id),\]
and we are done.
Otherwise, (by re-indexing) we may assume that $\alpha_n\notin F^{\times2}$.
Choose $v_i\in\alt(Q_i,\sigma_i)$ such that $v_i^2=\alpha_i\in F^\times$, $i=1,\cdots,n$.
By induction on $n$ we prove a stronger result than we need, namely that for $i=1,\cdots,n$, there exists a quaternion algebra with involution $(M_i,\tau_i)$ over $F$ such that
\[\textstyle(A,\sigma)\simeq_K\bigotimes_{i=1}^n(M_i,\tau_i)\otimes_F(K,\id),\]
and $v_i\in \alt(M_i,\tau_i)$.
The case $n=1$ follows from (\ref{quaternion}), hence let $n\geq2$.
Set $E=K[v_n]$ and $L=F[v_n]=F(\sqrt{\alpha_n})$.
Since $K^{\times2}\cap F^\times=F^{\times2}$, we have $\alpha_n\notin K^{\times2}$, so $E=K(\sqrt{\alpha_n})$.
Let $B=C_A(v_n)$ be the centralizer of $v_n$ in $A$ (we have identified $v_n\in Q_n$ with an element of $A$).
Then
\[\textstyle(B,\sigma|_B)\simeq_E\bigotimes_{i=1}^{n-1}(Q_i,\sigma_i)\otimes_K(E,\id),\]
is a totally decomposable $E$-algebra of degree $2^{n-1}$ with orthogonal involution.
By (\ref{norm}), $\co_{E/L}(A\otimes_KE)$ splits.
Since $B\sim A\otimes_KE$, by \cite[Ch. 8, (9.8)]{schar} we have $\co_{E/L}(B)\sim \co_{E/L}(A\otimes_KE)$,
hence $\co_{E/L}(B)$ is also split.
Note that $\mathfrak{Pf}(B,\sigma|_B)\simeq\lla\alpha_1,\cdots,\alpha_{n-1}\rra_E$ and $E=L(\eta)$ is a separable quadratic extension of $L$.
By identifying $v_i\otimes1\in\alt((Q_i,\sigma_i)\otimes_K(E,\id))$ with $v_i$, the induction hypothesis implies that for $i=1,\cdots,n-1$, there exists a quaternion algebra with orthogonal involution $(Q'_i,\sigma'_i)$ over $L$ such that
\[\textstyle(B,\sigma|_B)\simeq_E\bigotimes_{i=1}^{n-1}(Q'_i,\sigma'_i)\otimes_L(E,\id),\]
and $v_i\in \alt(Q'_i,\sigma'_i)$.
Since $L^2\subseteq F$, by (\ref{new}) there exists a quaternion $F$-algebra with orthogonal involution $(M_i,\tau_i)$ such that $(Q'_i,\sigma'_i)\simeq_L(M_i,\tau_i)\otimes_F(L,\id)$ and $v_i\in \alt(M_i,\tau_i)$, $i=1,\cdots,n-1$.
It follows that
\[\textstyle(B,\sigma|_B)\simeq_E\bigotimes_{i=1}^{n-1}(M_i,\tau_i)\otimes_F(L,\id)\otimes_L(E,\id)\simeq_E\bigotimes_{i=1}^{n-1}(M_i,\tau_i)\otimes_F(E,\id).\]
The $K$-algebra $\bigotimes_{i=1}^{n-1}M_i\otimes_FK$ may be identified with a subalgebra of $A\subseteq B$.
Set
\[\textstyle Q''_n=C_A(\bigotimes_{i=1}^{n-1}M_i\otimes_F K)\quad {\rm and}\quad \sigma''_n=\sigma|_{Q''_n}.\]
Then $(Q''_n,\sigma''_n)$ is a quaternion $K$-algebra with orthogonal involution and \[\textstyle(A,\sigma)\simeq_K\bigotimes_{i=1}^{n-1}(M_i,\tau_i)\otimes_F(K,\id)\otimes_K(Q''_n,\sigma''_n).\]
Note that according to (\ref{sc}), $\co_{K/F}(M_i\otimes_FK)$ splits for every $i$.
Since $\co_{K/F}(A)$ split, \cite[(3.13 (2))]{knus} implies that $\co_{K/F}(Q''_n)$ is also split.
As $v_n$ commutes with $B$ and $\bigotimes_{i=1}^{n-1}M_i\otimes_FK\subseteq B$, we have $v_n\in Q''_n$.
Using \cite[(3.5)]{mn}, we get $v_n\in\alt(Q''_n,\sigma''_n)$.
Hence (\ref{quaternion}) implies that $(Q''_n,\sigma''_n)\simeq_K(M_n,\tau_n)\otimes_F(K,\id)$
for some quaternion $F$-algebra with involution $(M_n,\tau_n)$ with $v_n\in\alt(M_n,\tau_n)$.
It follows that
\[\textstyle(A,\sigma)\simeq_K\bigotimes_{i=1}^{n}(M_i,\tau_i)\otimes_F(K,\id),\]
and $v_i\in\alt(M_i,\tau_i)$, $i=1,\cdots,n$.
This completes the proof of $(3)\Rightarrow(2)$.

Finally, the implication $(1)\Rightarrow(4)$ follows from (\ref{splits}) and (\ref{an}), even without the isotropy condition on $\sigma$.
If $\sigma$ is isotropic, the implication $(4)\Rightarrow(3)$ follows from (\ref{b}).
\end{proof}

We conclude by an example which shows that if $\sigma$ is anisotropic, the implication $(4)\Rightarrow(1)$ in (\ref{main}) does not hold.

\begin{exm}
Suppose that $F^2\neq F$.
Let $\lambda\in F\setminus F^2$ and set $\beta=\lambda+\delta+\eta\in K$.
Consider the involution $\sigma:M_2(K)\rightarrow M_2(K)$ defined by
\begin{align*}
\sigma\left(\begin{array}{cc}a & b \\c & d\end{array}\right)=\left(\begin{array}{cc}a & c\beta^{-1} \\b\beta & d\end{array}\right).
\end{align*}
Set
\begin{align*}
v=\left(\begin{array}{cc}0 & 1 \\\beta & 0\end{array}\right)\quad {\rm and}\quad w=\left(\begin{array}{cc}\eta & 1 \\\beta & \eta\end{array}\right).
\end{align*}
Then $v\in \alt(M_2(K),\sigma)$ and $w\in\sym(M_2(K),\sigma)$.
We also have $v^2=\beta$ and $w=v+\eta$ (we have identified scalar matrices in $M_2(K)$ with elements of $K$).
In particular, $\disc\sigma=\beta K^\times/K^{\times2}$.
Note that $\co_{K/F}(M_2(K))$ splits and $\Phi(A,\sigma)=K[v]=K[w]\simeq F[w]\otimes_FK$.
Since $w^2=\lambda\in F$, $F[w]$ is a totally singular conic $F$-algebra.
Hence, the pair $(M_2(K),\sigma)$ satisfies the condition $(4)$ in (\ref{main}).
If $(M_2(K),\sigma)$ has a descent to $F$, then by (\ref{quaternion}), there exists $u\in\alt(M_2(K),\sigma)$ such that $u^2\in F^\times$.
As $\alt(M_2(K),\sigma)$ is one-dimensional, we have $\alt(M_2(K),\sigma)=Kv$.
Write $u=(b+c\eta)v$ for some $b,c\in F$.
Then
\[u^2=(\lambda b^2+\delta b^2+\lambda \delta c^2+c^2\delta^2+\delta c^2)+(b^2+c^2+\lambda c^2)\eta.\]
Since $u^2\in F^\times$, we have $b^2+c^2+\lambda c^2=0$.
If $c=0$, then $b=0$ which implies that $u=0$, contradicting $u^2\in F^\times$.
If $c\neq0$, then $\lambda =b^2c^{-2}+1\in F^2$, which is again a contradiction.
Hence, $(M_2(K),\sigma)$ has no descent to $F$, i.e., the implication $(4)\Rightarrow(1)$ in (\ref{main}) does not hold for $(M_2(K),\sigma)$.
\end{exm}

\small

\scriptsize{
\noindent
A.-H. Nokhodkar, {\tt
  a.nokhodkar@kashanu.ac.ir},\\
Department of Pure Mathematics, Faculty of Science, University of Kashan, P.~O. Box 87317-51167, Kashan, Iran.}

\end{document}